\newcommand{\dom}{{\textrm{Dom\,}}}
\newcommand{\R}{\mathbb{R}}
\newcommand{\NN}{\mathcal{N}}
\newcommand{\cH}{\mathcal{H}}
\newcommand{\supp}{\text{\rm supp}}
\newcommand{\ve}{\varepsilon}
\newcommand{\gra}{\textrm{graph}}
\newcommand{\T}{\mathcal{T}}
\newcommand{\RCD}{\mathsf{RCD}}
\newcommand{\CD}{\mathsf{CD}}
\newcommand{\Geo}{{\rm Geo}}
\newcommand{\MCP}{\mathsf{MCP}}
\newcommand{\OptGeo}{{\rm OptGeo}}
\newcommand{\set}[1]{\left\{#1\right\}}
\newcommand{\Real}{\mathbb{R}}
\newcommand{\eps}{\varepsilon}
\newcommand{\m}{\mathfrak{m}}
\newcommand{\q}{\mathfrak{q}}
\renewcommand{\P}{\mathbb P}
\renewcommand{\P}{\mathcal{P}}
\newcommand{\mm}{\mathfrak m}
\newcommand{\qq}{\mathfrak q}
\newcommand{\ee}{{\rm e}}
\newcommand{\sfd}{\mathsf d}
\newcommand{\Opt}{\mathrm{OptGeo}}
\theoremstyle{plain}
\newtheorem{lemma}{Lemma}[section]
\newtheorem{theorem}[lemma]{Theorem}
\newtheorem{proposition}[lemma]{Proposition}
\newtheorem*{theorem*}{Theorem}
\newtheorem*{maintheorem*}{Main Theorem}
\theoremstyle{definition}
\newtheorem{definition}[lemma]{Definition}
\newtheorem*{definition*}{Definition}
\newtheorem{remark}[lemma]{Remark}
\numberwithin{equation}{section}
\newcommand{\ppi}{{\mbox{\boldmath$\pi$}}}
\title{Displacement convexity of Entropy \\ 
and the distance cost Optimal Transportation}
\author{Fabio Cavalletti\thanks{F. Cavalletti: Mathematics Area, SISSA, Trieste (Italy), email: cavallet@sissa.it.} , Nicola Gigli\thanks{N. Gigli: Mathematics Area, SISSA, Trieste (Italy), 
email: ngigli@sissa.it} \ and Flavia Santarcangelo\thanks{F. Santarcangelo: Mathematics Area, SISSA, Trieste (Italy), email: fsantarc@sissa.it}}
\date{}     
\begin{document}
\maketitle

\begin{abstract}
During the last decade Optimal Transport had a relevant role in the study of geometry of 
singular spaces that culminated with the Lott-Sturm-Villani theory. 
The latter is built on the characterisation of Ricci curvature lower bounds 
in terms of displacement convexity 
of certain entropy functionals along $W_{2}$-geodesics. 
Substantial recent advancements in the theory 
(localization paradigm and local-to-global property)
have been obtained considering the different point of view of $L^{1}$-Optimal transport 
problems yielding a different curvature dimension $\CD^{1}(K,N)$ \cite{CMi} formulated 
in terms of one-dimensional curvature properties of integral curves of Lipschitz maps.
In this note we show that the two approaches produce the same curvature-dimension condition 
reconciling the two definitions. In particular we show that the $\CD^{1}(K,N)$ condition 
can be formulated in terms of displacement convexity along $W_{1}$-geodesics. 

\end{abstract}

\bibliographystyle{plain}

\section{Introduction}

The formulation of an appropriate version of Ricci curvature lower bounds 
valid for possibly singular spaces has been a central topic of research for several years. 
During the last decade Optimal Transport had a relevant role in the topic 
that culminated with the successful theory of Lott-Villani  
\cite{lottvillani:metric} and Sturm \cite{sturm:I,sturm:II} of metric measure spaces 
verifying a lower bound on the Ricci curvature in a synthetic sense.

The theory is formulated in terms of displacement convexity  of the
Renyi entropy. The latter is defined on the set of probability measures 
$S_N ( \cdot| \m):{\mathcal{P}_2(X,\sfd)} \to \mathbb{R}$ as follows
\[
S_N(\mu | \m):= -\int_{X} \rho^{-1/N}\,d\mu,
\]
where $\rho$ denotes the density of the absolutely continuous part 
of $\mu$ with respect to $\m$. In rough terms, a space will 
satisfy the $\CD(K,N)$ condition if the entropy evaluated along 
$W_{2}$-geodesics is more convex than the 
entropy evaluated along $W_{2}$-geodesics of the model space 
with constant curvature $K$ and dimension $N$ in an appropriate 
sense (see Definition \ref{D:sturmII}). 

The theory had a huge impact 
and a detailed discussion on its development would be beyond the scope of the note. 
For our purposes, we mention that substantial recent advancements in the theory 
(localization paradigm and local-to-global property)
have been obtained considering the different point of view of $L^{1}$-Optimal transport 
problems yielding a different curvature dimension $\CD^{1}(K,N)$ \cite{CMi} formulated 
in terms of one-dimensional curvature properties of integral curves of Lipschitz maps.

Motivated by the proof of the local-to-global property for the curvature-dimension condition, in \cite{CMi} 
has been shown that a metric measure space $(X,\sfd,\mm)$ verifies 
$\CD(K,N)$ if and only if it satisfies $\CD^{1}(K,N)$, 
provided $X$ is essentially non-branching (see Definition \ref{D:W2enb})
and the total space to have finite mass (i.e. $\mm(X) < \infty$).

Moreover it was recently addressed whether or not the $\CD$ condition really depends 
on the special exponent $p = 2$ used to check displacement convexity of entropy. 
While for smooth manifold it is clear that it does not (being equivalent to a lower bound on the Ricci tensor) the general case of metric measure spaces has been considered 
in the recent \cite{CCMcCAS} where complete equivalence will be proved.
It remained however unclear if the $\CD^{1}(K,N)$ condition 
could  be equivalently formulated in terms of displacement convexity of the Entropy functional along $W_{1}$-geodesics. 

In this note we show that this is the case and the two approaches produce the same curvature-dimension condition reconciling the two definitions. 
We report here the main result of the paper. 

\begin{theorem}\label{T:Mainintro}
Let $(X,\sfd,\m)$ be an essentially non-branching metric measure space
and further assume $\mm(X) = 1$. 
Then $(X,\sfd,\m)$ satisfies the $\CD^1(K,N)$ condition if and only if 
it satisfies the $\CD_{1}(K,N)$ condition.
\end{theorem}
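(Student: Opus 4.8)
The plan is to push both conditions down to the one‑dimensional ``needles'' produced by $L^{1}$‑localization and to compare them there. Recall from \cite{CMi} that for a $1$‑Lipschitz $u\colon X\to\R$ one has the transport set $\mathcal{T}_{u}$, its partition into transport rays $\{X_{q}\}_{q\in Q}$, and a disintegration $\m\llcorner\mathcal{T}_{u}=\int_{Q}\m_{q}\,\q(dq)$ with $\m_{q}=h_{q}\,\haus^{1}\llcorner X_{q}$; by definition $(X,\sfd,\m)$ satisfies $\CD^{1}(K,N)$ iff for every such $u$ one has, for $\q$‑a.e.\ $q$, that $h_{q}$ is a $\CD(K,N-1)$ density on the ray, which by one‑dimensional theory is equivalent to $(X_{q},|\cdot|,\m_{q})$ being a $\CD(K,N)$ space. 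I shall also use two elementary facts about needles: the monotone rearrangement is simultaneously $W_{1}$‑ and $W_{2}$‑optimal there, and the $\CD(K,N)$ property of a needle is witnessed by the $S_{N'}$‑convexity inequality along that monotone geodesic.

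\emph{From $\CD^{1}(K,N)$ to $\CD_{1}(K,N)$.} Let $\mu_{0}=\rho_{0}\m$, $\mu_{1}=\rho_{1}\m$ be absolutely continuous with bounded densities and bounded support; as is natural in the $L^{1}$‑setting we may take $\mu_{0}\perp\mu_{1}$. Fix a Kantorovich potential $u$ for the $W_{1}$‑problem between $\mu_{0}$ and $\mu_{1}$, so that $\mu_{0},\mu_{1}$ are concentrated on $\mathcal{T}_{u}$; disintegrating along the rays gives $\mu_{i}=\int_{Q}\mu_{i}^{q}\,\q(dq)$ with $\mu_{i}^{q}\ll\m_{q}$. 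On each ray transport $\mu_{0}^{q}$ to $\mu_{1}^{q}$ by the one‑dimensional monotone map, let $(\mu_{t}^{q})_{t\in[0,1]}$ be the induced interpolation, and set $\mu_{t}:=\int_{Q}\mu_{t}^{q}\,\q(dq)$, $\pi:=\int_{Q}\pi^{q}\,\q(dq)$. Since $u$ is a genuine potential the monotone coupling on each ray moves mass in a single $u$‑direction, so $\pi$ is $W_{1}$‑optimal; since distinct rays are essentially disjoint and the monotone map does not fold, $\mu_{t}$ is absolutely continuous with $\tfrac{d\mu_{t}}{d\m}$ equal on $X_{q}$ to $\tfrac{d\mu_{t}^{q}}{d\m_{q}}$, and additivity of $W_{1}$ along the disintegration shows $(\mu_{t})$ is a $W_{1}$‑geodesic. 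Then $S_{N'}(\mu_{t}\,|\,\m)=\int_{Q}S_{N'}(\mu_{t}^{q}\,|\,\m_{q})\,\q(dq)$; applying on each needle the $\CD(K,N)$ inequality (available because $h_{q}$ is a $\CD(K,N-1)$ density) and integrating in $q$ yields precisely the $\CD_{1}(K,N)$ inequality for $(\mu_{t})$ and $\pi$.

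\emph{From $\CD_{1}(K,N)$ to $\CD^{1}(K,N)$.} I argue by contraposition. Suppose $\CD^{1}(K,N)$ fails: there are a $1$‑Lipschitz $u$ and a set $G\subseteq Q$ with $\q(G)>0$ on which $h_{q}$ is not a $\CD(K,N-1)$ density. By a measurable selection — after shrinking $G$ so that the relevant quantities are uniformly controlled and the construction stays in a bounded region — pick for $q\in G$ points $a(q)<x_{*}(q)<b(q)$ on $X_{q}$ witnessing the failure, put on intervals of size $\delta$ around $a(q)$ and $b(q)$ the normalized restrictions $\mu_{0}^{q},\mu_{1}^{q}$ of $\m_{q}$, and set $\mu_{i}:=\q(G)^{-1}\int_{G}\mu_{i}^{q}\,\q(dq)$; these are absolutely continuous, boundedly supported, mutually singular, and $u$ is a Kantorovich potential between them. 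Hence every $W_{1}$‑optimal plan and every $W_{1}$‑geodesic disintegrates along the rays of $u$, each geodesic restricting to a $W_{1}$‑geodesic $(\mu_{t}^{q})$ of the needle $X_{q}$ from $\mu_{0}^{q}$ to $\mu_{1}^{q}$. The decisive point is that, \emph{whatever} that geodesic, $\mu_{1/2}^{q}$ is supported in the interval $J_{q}$ of midpoints of $\spt\mu_{0}^{q}\times\spt\mu_{1}^{q}$, which contracts to $\{x_{*}(q)\}$ as $\delta\to0$; by Jensen $S_{N'}(\mu_{1/2}^{q}\,|\,\m_{q})\ge-\m_{q}(J_{q})^{1/N'}$, and as $\delta\to0$ this lower bound tends to the value the monotone $W_{2}$‑geodesic attains at $t=\tfrac12$. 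Since $a(q),x_{*}(q),b(q)$ witness the failure of the $\CD(K,N-1)$ density inequality — equivalently of the needle's $\CD(K,N)$ inequality tested on these bumps — that monotone value strictly exceeds the right‑hand side of the $\CD_{1}$ inequality at $t=\tfrac12$, $N'=N$; so, for $\delta$ small and uniformly in $q\in G$, does $S_{N'}(\mu_{1/2}^{q}\,|\,\m_{q})$, for \emph{every} $W_{1}$‑geodesic. Integrating this strict reversed inequality over $G$ shows that no $W_{1}$‑geodesic between $\mu_{0}$ and $\mu_{1}$ satisfies the $\CD_{1}(K,N)$ inequality at $t=\tfrac12$, contradicting $\CD_{1}(K,N)$.

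The hardest point is the third paragraph. For $p>1$, $W_{p}$‑geodesics between absolutely continuous measures are essentially unique, so one reads off the needle's behaviour from the canonical geodesic; for $p=1$ they are massively non‑unique, and the whole difficulty is to make the one‑dimensional violation robust across all of them. The device is that the failure of the one‑dimensional curvature condition is a \emph{pointwise}, hence scale‑localizable, defect, so that concentrating $\mu_{0}^{q},\mu_{1}^{q}$ near a witnessing point forces every $W_{1}$‑interpolation to look, at the midpoint, like the monotone one — where the violation survives. The remaining ingredients (the localization and disintegration machinery, the measurable selections, the bookkeeping relating the $\tau_{K,N'}$ coefficients on needles to the $\sigma_{K,N-1}$ coefficients of densities, and the reduction to mutually singular, boundedly supported endpoints) are routine and rest on essential non‑branching and $\m(X)=1$ exactly as in \cite{CMi}.
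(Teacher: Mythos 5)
Your first direction ($\CD^{1}\implies\CD_{1}$) follows the paper's route: fix a Kantorovich potential, disintegrate, interpolate along rays, glue. However the claimed identity $S_{N'}(\mu_t\,|\,\m)=\int_{Q}S_{N'}(\mu_t^{q}\,|\,\m_q)\,\q(dq)$ is not literally true: if $\mu_t^{q}$ are probability measures then $\mu_t=\int\mu_t^{q}\,\q_0(dq)$ with $\q_0=f_\sharp\mu_0\neq\q$ in general, and the density of $\mu_t$ with respect to $\m$ picks up the factor $\tfrac{d\q_0}{d\q}(q)=\int\rho_0\,d\m_q$, so the entropy only factorizes up to the weight $(\tfrac{d\q_0}{d\q})^{1-1/N'}$, not the weight $\tfrac{d\q_0}{d\q}$ itself. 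The paper sidesteps this by first establishing $\q_0=\q_1$ and then observing that the pointwise one-dimensional $\CD$ inequality \eqref{equ:cdpunt} is homogeneous in the common normalisation constant, so it immediately yields \eqref{equ:cdespl}; you need to make the same observation. You should also note that the reduction to $\mu_0\perp\mu_1$ with bounded densities and support is not ``without loss of generality'' here: the $\CD_1$ inequality must be produced for all absolutely continuous pairs, and $W_1$ lacks the approximation stability one would use for $p>1$, so either drop the reduction (the argument does not need it) or justify it.

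The second direction has a genuine gap. By testing $\CD_1$ with bumps of the \emph{same} size $\delta$ around $a(q)$ and $b(q)$, and sending $\delta\to0$, the best you can extract at $t=\tfrac12$ on a ray is
\[
h_q\bigl(\tfrac{a(q)+b(q)}{2}\bigr)^{1/N}\;\le\;\tau^{(1/2)}_{K,N}\bigl(b(q)-a(q)\bigr)\Bigl[h_q(a(q))^{1/N}+h_q(b(q))^{1/N}\Bigr],
\]
i.e.\ a $\tau_{K,N}$-type concavity of $h_q^{1/N}$. This is \emph{strictly weaker} than the one-dimensional $\CD(K,N)$ density condition, which is $\sigma_{K,N-1}$-concavity of $h_q^{1/(N-1)}$ (Remark \ref{rem:cd1dim}). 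For $K=0$ your inequality is concavity of $h^{1/N}$, while the needed condition is concavity of $h^{1/(N-1)}$; e.g.\ $h(s)=s^{N-1+\eps}$ on a bounded interval, $0<\eps\le1$, satisfies the former and fails the latter. So the contradiction you seek need not materialise. The fix — and this is exactly the device the paper uses in \eqref{equ:firstclaim}–\eqref{equ:cmclaim} — is to put bumps of lengths $\eps L_0$ and $\eps L_1$ with \emph{independent} $L_0,L_1>0$, let $\eps\to0$ to get the two-parameter $\tau$-inequality \eqref{equ:cmclaim}, and then choose $L_0,L_1$ as in the paper (proportional to $\sigma^{(1-t)}h(R_0)^{1/(N-1)}/(1-t)$ and $\sigma^{(t)}h(R_1)^{1/(N-1)}/t$ respectively) to recover the $\sigma_{K,N-1}$-concavity of $h^{1/(N-1)}$. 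Your contraposition scheme is otherwise compatible with the paper's (which is itself a Lusin-type contradiction over a positive-measure set of rays, together with the $\sup/\inf$ device of \eqref{equ:firstclaim}), but without the freedom in $L_0/L_1$ the argument does not close.

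Finally, a smaller but real point: before the contradiction can be set up you must know that $\m\llcorner_{\T_u}=\int_Q h_q\,\H^1\llcorner_{X_q}\,\q(dq)$ with $h_q$ continuous. In the paper this is not taken for granted: $\CD_1$ is first shown to imply (a variant of) $\MCP(K,N)$ (Lemma \ref{le:32}), which together with essential non-branching gives Theorem \ref{teo:branch} and the regularity of the $h_q$. Calling this ``routine'' is fine, but it is an essential hypothesis-chasing step, and it is where the assumptions ``essentially non-branching'' and $\m(X)=1$ actually enter.
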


The $\CD_{1}(K,N)$ condition is formulated, in analogy with the classical 
$\CD(K,N)$, as displacement convexity of entropy along $W_{1}$-geodesics;
its precise formulation is given in Definition \ref{D:CD1entropy}.


\section{Background material}
In this section we will recall some  basic notions  used throughout the paper.

A triple $(X,\sfd,\m)$  is called a \emph{metric measure space}  if $(X, \sfd)$ is a Polish space (i.e. a  complete and separable metric space) and $\m$ is  a positive
Radon measure over $X$. In what follows we will always deal with m.m.s. in which 
 $\m$ is a  probability measure, i.e. $\m(X)=1$; we will denote with $\mathcal{P}(X)$ the space of all Borel probability
measures over $X$.

A curve $\gamma\in{C([0,1],X)}$ is called a  \emph{constant speed geodesic} if 
\[
\sfd(\gamma_s, \gamma_t)= |s-t|d(\gamma_0,\gamma_1),\,\,\,\,\forall s,t\in{[0,1]}.
\]
From now on the set of  all constant speed geodesics will be denoted with 
$\Geo(X)$ while  $\ee_t:\Geo(X)\to X$ will denote the \emph{evaluation map}  defined by
$\ee_t(\gamma)=\gamma_t$.
Moreover we will call $(X,\sfd,\m)$ \emph{geodesic} if, for any choice of $x, y\in{X}$, there exists  $\gamma\in{\Geo(X)}$ with $\gamma_0=x$, $\gamma_1=y$.

As usual, for any  $p\geq 1$, $\P_p(X)$ will denote 
the space of probability measures with finite $p$-moment, i.e. 
\[
\mathcal{P}_p(X)= \{ \m\in{\mathcal{P}(X)}: \int_{X} \sfd^p(x,x_0)\,\m(dx) < +\infty, \text{for some } x_0\in{X}\},
\]
and with  $\mathcal{P}_p(X,\sfd,\m)$ its subspace of $\m$-absolutely continuous probability. 
The space $\mathcal{P}_p(X)$ will be  endowed with the $L^p$-Wasserstein distance $W_p$ defined by 
\begin{equation}\label{equ:Wp}
W_p(\mu_0,\mu_1)=\left(\inf_{\pi} \int_{X\times X}\sfd^p(x,y) \pi(dxdy) \right)^{1/p},
\end{equation}
where the infimum is taken in the class of all probability measures in $\mathcal{P}(X\times X)$ with first and second marginal given by $\mu_0$ and $\mu_1$ respectively.

It is a classical fact that if $(X,\sfd)$ is geodesic then $(\P_p(X), W_p)$ is geodesic too
and a curve $[0,1]\ni{t}  \mapsto \mu_t\in{\P_p(X)}$ is a geodesic
if and only if there exists $\nu\in{\mathcal{P}(\Geo(X))}$ such that 
$(\ee_0, \ee_1)_\sharp \nu$ realizes the minimum in \eqref{equ:Wp} and $\mu_t={\ee_t}_{\sharp}\nu$.
We will summarize these two properties saying that $\nu$ is an optimal dynamical plan
 $\nu \in{\OptGeo_{p}(\mu_0,\mu_1)}$.
Finally, $ A\subset \Geo(X)$ is called \emph{a set of non-branching geodesics} 
if  for any $\gamma^1, \gamma^2 \in{A}$
\[
\exists\, \bar{t}\in{(0,1)}:\gamma^1(s)=\gamma^2(s),\,\, \forall s\in{[0,\bar{t}]} \implies \gamma^1(t)=\gamma^2(t), \,\,\forall t\in{[0,1]}.
\]

Finally we recall the classical definition of essentially non-branching. 
This notion has been firstly introduced in \cite{RS2014} and considers only the case $p =2$.

\begin{definition}[Essentially non-branching]\label{D:W2enb}
Let $(X,\sfd, \mm)$ be a m.m.s.. We say that $(X,\sfd,\m)$  is $W_{2}$-essentially non-branching if 
for any $\mu_{0},\mu_{1} \in \mathcal{P}_2(X,\sfd,\mm)$ any element of $\Opt_{2}(\mu_{0},\mu_{1})$ 
is concentrated on a set of non-branching geodesics.
\end{definition}

\subsection{$L^1$-Optimal Transport}\label{Ss:1OTu}
To any $1$-Lipschitz function $u : X \to \R$ can be naturally associated  a $\sfd$-cyclically monotone set $\Gamma_u$ defined  in the following way:
$$
\Gamma_{u} : = \{ (x,y) \in X\times X : u(x) - u(y) = \sfd(x,y) \}.
$$
We define the \emph{transport relation} $R_u$ and the \emph{transport set} $\mathcal{T}_{u}$ in the following way:
\begin{equation}\label{E:R}
R_{u} := \Gamma_{u} \cup \Gamma^{-1}_{u} ~,~ \mathcal{T}_{u} := P_{1}(R_{u} \setminus \{ x = y \}) ,
\end{equation}
where $\{ x = y\}$ denotes the diagonal $\{ (x,y) \in X^{2} : x=y \}$, 
$P_{i}$ the projection onto the $i$-th component 
and  $\Gamma^{-1}_{u}= \{ (x,y) \in X \times X : (y,x) \in \Gamma_{u} \}$.

\noindent
Since $u$ is $1$-Lipschitz, $\Gamma_{u}, \Gamma^{-1}_{u}$ and $R_{u}$ are closed sets, and so are $\Gamma_u(x)$ and $R_u(x)$   (recall that $\Gamma_u(x) = \set{y \in X \; ;\; (x,y) \in \Gamma_u}$ 
and similarly for $R_u(x)$).
Consequently $\mathcal{T}_{u}$ is a projection of a Borel set and hence it is analytic; it follows that it is universally measurable, and in particular, $\mm$-measurable \cite{Srivastava} .

The transport ``flavor'' of  the previous definitions can be seen in the next property that is immediate to verify: for any $\gamma\in \Geo(X)$ such that 
$(\gamma_0,\gamma_1)\in \Gamma_{u}$, then
\[
(\gamma_s,\gamma_t)\in \Gamma_{u},\quad \forall \  0\leq s\leq t\leq 1.
\]

Finally, recall the definition of the sets of  \emph{forward and backward branching points} introduced in \cite{cava:MongeRCD}:
\begin{align*}
&A_{+,u}:=\{x\in{\T_{u}: \exists\,z,w\in{\Gamma_{u}(x)}, (z,w)\notin{R_{u}}}\},\\
&A_{-,u}:=\{x\in{\T_{u}: \exists\,z,w\in{\Gamma_{u}(x)^{-1}}, (z,w)\notin{R_{u}}}\}.
\end{align*}

Once branching points are removed, we obtain the \emph{non-branched transport set}  and the \emph{non-branched transport relation},
\begin{equation}\label{E:branch}
\T^{b}_{u}:=\T_{u}\setminus (A_{+,u}\cup A_{-,u}), \quad
R_{u}^b:=R_{u}\cap (\T_{u}^b\times \T_{u}^b);
\end{equation}
the following was obtained in \cite{cava:MongeRCD} and highlights the motivation
to remove branching points.

\begin{proposition}
The set of transport rays $R_{u}^b\subset X\times X$ is an equivalence relation on the set $\T_{u}^b$.
\end{proposition}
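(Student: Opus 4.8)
The plan is to verify, one after the other, that $R_u^b$ is reflexive, symmetric and transitive on $\mathcal{T}_u^b$. Reflexivity and symmetry are immediate from the definitions; all the content of the statement — and the reason the forward/backward branching points are removed — sits in transitivity. For reflexivity, note that $(x,x)\in\Gamma_u$ for every $x\in X$, since $u(x)-u(x)=0=\sfd(x,x)$, so whenever $x\in\mathcal{T}_u^b$ one has $(x,x)\in R_u\cap(\mathcal{T}_u^b\times\mathcal{T}_u^b)=R_u^b$. Symmetry is built into the construction: $R_u=\Gamma_u\cup\Gamma_u^{-1}$ is a symmetric subset of $X\times X$ and $\mathcal{T}_u^b\times\mathcal{T}_u^b$ is symmetric, hence so is $R_u^b$.

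For transitivity, let $(x,y),(y,z)\in R_u^b$; since $x,z\in\mathcal{T}_u^b$ it is enough to prove $(x,z)\in R_u$. I would first record the symmetry $u\mapsto -u$: one has $\Gamma_{-u}=\Gamma_u^{-1}$, hence $R_{-u}=R_u$, $\mathcal{T}_{-u}=\mathcal{T}_u$, $A_{\pm,-u}=A_{\mp,u}$, and therefore $\mathcal{T}_{-u}^b=\mathcal{T}_u^b$; replacing $u$ by $-u$ interchanges $\Gamma_u$ and $\Gamma_u^{-1}$. Each of $(x,y),(y,z)\in R_u$ says that the pair, or its reverse, lies in $\Gamma_u$, which a priori gives four cases, but up to this symmetry it suffices to treat two: \textbf{(i)} $(x,y),(y,z)\in\Gamma_u$, and \textbf{(ii)} $(y,x),(y,z)\in\Gamma_u$. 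In case (i) the key computation is
\[
u(x)-u(z)=\big(u(x)-u(y)\big)+\big(u(y)-u(z)\big)=\sfd(x,y)+\sfd(y,z)\ge\sfd(x,z),
\]
which together with the $1$-Lipschitz bound $u(x)-u(z)\le\sfd(x,z)$ forces $(x,z)\in\Gamma_u\subset R_u$; note this case does not even use the removal of branching points. In case (ii) we have $x,z\in\Gamma_u(y)$, and since $y\in\mathcal{T}_u^b$ we know $y\notin A_{+,u}$, which by definition means precisely that every pair of points of $\Gamma_u(y)$ lies in $R_u$, so again $(x,z)\in R_u$. The mirror of case (ii) under $u\mapsto -u$ is $x,z\in\Gamma_u(y)^{-1}$ and invokes instead $y\notin A_{-,u}$. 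This exhausts the cases and proves transitivity.

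I do not expect a genuine obstacle here. The one conceptual point is to recognise that the failure of transitivity of the raw relation $R_u$ is exactly the branching phenomenon encoded by $A_{+,u}$ and $A_{-,u}$ — so that deleting those sets is precisely what is needed — after which, once the case split is organised around ``aligned'' versus ``anti-aligned'' triples (the $u\mapsto -u$ symmetry halving the work), each case collapses to a single line: either the triangle inequality together with $1$-Lipschitzness, or the very definition of $A_{\pm,u}$. The only bookkeeping to watch is the degenerate situations $x=y$ or $z=y$, which cause no harm since the diagonal always lies in $\Gamma_u\subset R_u$.
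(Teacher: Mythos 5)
Your proof is correct and is essentially the standard argument from \cite{cava:MongeRCD} (the paper itself only cites this proposition without reproducing a proof): reflexivity and symmetry are immediate, and transitivity is obtained by splitting into the ``aligned'' case resolved via the triangle inequality and $1$-Lipschitzness, and the ``branching'' cases resolved precisely by the removal of $A_{\pm,u}$. The $u\mapsto -u$ symmetry you invoke to halve the case analysis is a clean piece of bookkeeping, but otherwise the route coincides with the one in the literature.
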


\noindent
Noticing that once we fix $x\in \T_{u}^b$, for any choice of  
$z,w\in R_{u}(x)$, there exists $\gamma\in  \Geo(X)$ such that 
\[
\{x,z,w\} \subset  \{\gamma_s: s \in{[0,1]}\},
\]
it is not hard to deduce that each equivalence class is a geodesic. 

\smallskip
The next step is to use this partition of the transport set made of equivalence classes 
to obtain a corresponding decomposition of the ambient measure $\mm$ restricted 
to $\T^b_{u}$.
Disintegration Theorem (for an account on it see \cite{biacar:cmono}) will be
the appropriate technical tool to use. The first step is to obtain 
an $\mm$-measurable quotient map 
$f$ for the equivalence relation $R^{b}_{u}$ over $\T_{u}^{b}$ whose construction  
is by now a classical procedure. It is worth stressing that the quotient set 
will be identified with with a subset of $\T_{u}^{b}$ containing a point 
for each equivalence class, i.e. for each geodesic forming $\T_{u}^{b}$.
In particular, there will be an $\mm$-measurable quotient 
set $Q \subset \T_{u}^{b}$, image of $f$.
The Disintegration Theorem (for an account on it see \cite{biacar:cmono}) then implies the following disintegration formula:
\begin{equation}\label{E:disint}
\m \llcorner_{\T^b_{u}}= \int_{Q} \m_{\alpha} \mathfrak{q}(d\alpha),
\end{equation}
where $\qq = f_{\sharp} \m \llcorner_{\T_{u}^b}$, and for 
$\qq$-a.e. $\alpha \in Q$
we have $\mm_{\alpha} \in \mathcal{P}(X)$, 
$\mm_{\alpha}(X \setminus X_{\alpha}) = 0$, where we have used the notation $X_{\alpha}$ to denote the equivalence class of the element $\alpha \in Q$ 
(indeed $X_{\alpha} = R(\alpha)$).
 In \cite{cava:MongeRCD}, it was proved  that under $\RCD(K,N)$ condition 
the measure of the sets of branching points is zero. As already observed several times in the literature, the proof only requires existence and uniqueness of optimal maps for $p = 2$.

\begin{theorem}\label{teo:branch}
Let $(X,\sfd, \mm)$ be a m.m.s. such that for any 
$\mu_0,\mu_1\in{\mathcal{P}_{2}(X)}$ with $\mu_0 \ll \m$ any $W_2$-optimal transference plan 
is concentrated on the graph of a function. Then for every 1-Lipschitz function $u:X\to\R$ we have
\[
\m(A_{+,u})=\m(A_{-,u})=0.
\]
\end{theorem}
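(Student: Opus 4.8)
The plan is to argue by contradiction: if one of the two branching sets had positive $\m$-measure, I would produce a $W_2$-optimal transference plan whose first marginal is absolutely continuous with respect to $\m$ but which is not concentrated on a graph, contradicting the hypothesis. First, it suffices to treat the forward set: for $u':=-u$ (again $1$-Lipschitz) one has $\Gamma_{u'}=\Gamma_u^{-1}$, hence $R_{u'}=R_u$, $\T_{u'}=\T_u$ and $A_{+,u'}=A_{-,u}$, so proving $\m(A_{+,v})=0$ for every $1$-Lipschitz $v$ gives both claims. Assume then $\m(A_{+,u})>0$. The geometric content of forward branching is: for every $x\in A_{+,u}$ there are two constant speed geodesics $\gamma^1,\gamma^2\in\Geo(X)$ with $\gamma^1(0)=\gamma^2(0)=x$, of equal length, with $(\gamma^i(0),\gamma^i(1))\in\Gamma_u$ and $(\gamma^1(1),\gamma^2(1))\notin R_u$ (in particular $\gamma^1(1)\neq\gamma^2(1)$). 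Indeed, picking $z,w\in\Gamma_u(x)$ with $(z,w)\notin R_u$ and $\sfd(x,z)=a\le b=\sfd(x,w)$, one joins $x$ to $z$ and to $w$ by geodesics (the ambient space being geodesic) and replaces the endpoint of the second by the point $w'$ at distance $a$ from $x$ along it; since $u(z)=u(x)-a=u(w')$ and $(z,w)\notin R_u$ one gets $z\neq w'$ and $(z,w')\notin R_u$, while the transport property of $\Gamma_u$ makes both geodesics $\Gamma_u$-monotone. The set of triples $(x,\gamma^1,\gamma^2)\in X\times\Geo(X)^2$ with these properties is Borel and projects onto $A_{+,u}$, so by the Jankov--von Neumann uniformization theorem we obtain $\m$-measurable maps $x\mapsto\gamma^1_x,\gamma^2_x$ on $A_{+,u}$.

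Set $\rho(x):=\sfd(x,\gamma^1_x(1))=\sfd(x,\gamma^2_x(1))>0$, $\tau(x):=\sup\{t\in[0,1]:\gamma^1_x(t)=\gamma^2_x(t)\}$, which is $<1$ because the endpoints differ, and $a(x):=\tau(x)\rho(x)$; all three are $\m$-measurable. For $t>\tau(x)$ one has $\gamma^1_x(t)\neq\gamma^2_x(t)$, so for every $\rho_0\in(a(x),\rho(x)]$ the points $T_1(x):=\gamma^1_x(\rho_0/\rho(x))$ and $T_2(x):=\gamma^2_x(\rho_0/\rho(x))$ are distinct, lie at distance $\rho_0$ from $x$, and belong to $\Gamma_u(x)$. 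Since $\rho(x)-a(x)=(1-\tau(x))\rho(x)>0$ on $A_{+,u}$, Tonelli's theorem yields $\int_0^\infty\m\big(\{x\in A_{+,u}:a(x)<\rho_0\le\rho(x)\}\big)\,d\rho_0=\int_{A_{+,u}}(\rho-a)\,d\m>0$, so there is $\rho_0>0$ for which $K_0:=\{x\in A_{+,u}:a(x)<\rho_0\le\rho(x)\}$ has positive measure; by inner regularity we take $K_0$ compact with $0<\m(K_0)<\infty$.

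Now let $\mu_0:=\m\llcorner_{K_0}/\m(K_0)\ll\m$ and $\pi:=\tfrac12(\id,T_1)_\sharp\mu_0+\tfrac12(\id,T_2)_\sharp\mu_0$. Then $\supp\pi\subset\Gamma_u$ and every pair of $\supp\pi$ has $\sfd$-cost exactly $\rho_0$. Since $\Gamma_u$ is $\sfd$-cyclically monotone (it is the contact set of $u$, a Kantorovich potential for the distance cost), for any cyclic configuration $(x_1,y_1),\dots,(x_n,y_n)$ in $\supp\pi$ and any cyclic permutation $\sigma$ we get $\sum_i\sfd(x_i,y_{\sigma(i)})\ge\sum_i\sfd(x_i,y_i)=n\rho_0$, hence by Cauchy--Schwarz $\sum_i\sfd(x_i,y_{\sigma(i)})^2\ge\tfrac1n\big(\sum_i\sfd(x_i,y_{\sigma(i)})\big)^2\ge n\rho_0^2=\sum_i\sfd(x_i,y_i)^2$; thus $\supp\pi$ is $\sfd^2$-cyclically monotone and, the cost being bounded, $\pi$ is $W_2$-optimal. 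On the other hand, for every measurable $T:X\to X$ the sets $\{x\in K_0:T(x)=T_1(x)\}$ and $\{x\in K_0:T(x)=T_2(x)\}$ are disjoint, so $\pi(\gr T)\le\tfrac12$ and $\pi$ is not concentrated on a graph. This contradicts the hypothesis, so $\m(A_{+,u})=0$, and by the reduction also $\m(A_{-,u})=0$.

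The only genuinely delicate point is the passage to a common transport distance $\rho_0$: a $\sfd$-cyclically monotone set of non-constant length need not be $\sfd^2$-cyclically monotone, so the plan must move every bit of mass exactly the same distance; it is the separation time $\tau(x)$ — quantifying how far the two branches issuing from $x$ stay together — combined with the slicing (Tonelli) argument that produces such a $\rho_0$ on a positive-measure set after the measurable selection. The remaining ingredients, namely the elementary geometry of branching points, the standard uniformization theorem, and the sufficiency of cyclical monotonicity of the support for $W_2$-optimality, are routine.
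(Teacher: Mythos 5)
The paper does not reproduce a proof of Theorem~\ref{teo:branch}; it cites \cite{cava:MongeRCD}, and your argument is a correct reconstruction of the argument given there: measurably select, at a fixed common distance $\rho_0$ found by slicing, two distinct targets in $\Gamma_u(x)$ for a positive-measure set of $x$, glue them into a non-graphical plan $\pi$ supported in $\Gamma_u$, and upgrade the $\sfd$-cyclical monotonicity inherited from $\Gamma_u$ to $\sfd^2$-cyclical monotonicity (hence $W_2$-optimality) via the Cauchy--Schwarz step, which works precisely because every pair in $\supp\pi$ is moved the same distance $\rho_0$. The one ingredient you use without stating it, geodesicity of $(X,\sfd)$ (needed to produce the truncated endpoint $w'$), is not literally in the hypotheses of the theorem but is a standing assumption in this part of the paper and holds in every space to which the theorem is applied, so this is not a gap; as a minor stylistic remark, $W_2$-optimality of $\pi$ can also be obtained without invoking cyclical monotonicity by noting that for any competitor $\tilde\pi$ with the same marginals one has $\int\sfd^2\,d\tilde\pi\geq\bigl(\int\sfd\,d\tilde\pi\bigr)^2\geq\bigl(\int\sfd\,d\pi\bigr)^2=\rho_0^2=\int\sfd^2\,d\pi$ by Jensen and the $W_1$-optimality of $\pi$.
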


It is worth here recalling that if $(X,\sfd, \mm)$ verifies $\MCP(K,N)$ and is essentially non-branching, 
then \cite{CavallettiMondino17c} implies that $(X,\sfd, \mm)$ verifies the assumptions of Theorem \ref{teo:branch} 
implying $\m(A_{+,u})=\m(A_{-,u})=0$, for any $u : X \to \R$ 1-Lipschitz function.

\subsection{Curvature-Dimension conditions}
We conclude this section by quickly recalling the main definitions of synthetic Ricci curvature lower bounds 
relevant to this note. 
We start with the first one that has been given by Lott-Villani \cite{lottvillani:metric} and Sturm \cite{sturm:I},\cite{sturm:II}.


Given a  metric measure space $(X,\sfd,\m)$ and $N\in{\mathbb{R}},N\geq 1$, we define the \emph{Renyi entropy functional} $S_N ( \cdot| \m):{\mathcal{P}_2(X,\sfd)} \to \mathbb{R}$ as follows
\[
S_N(\mu | \m):= -\int_{X} \rho^{-1/N}\,d\mu,
\]
where $\rho$ denotes the density of the absolutely continuous part of $\mu$ with respect to $\m$.
We also recall the definition of distortion coefficients. 
For every $K, N\in{\mathbb{R}}$  with $N\geq 1$, we set
\[
D_{K,\NN} := \begin{cases}  \frac{\pi}{\sqrt{K/\NN}}  & K > 0 \;,\; \NN < \infty, \\ +\infty & \text{otherwise.} \end{cases}
\]
Given $t \in [0,1]$ and $0 < \theta < D_{K,\NN}$, 
the  \emph{distortion coefficients}  $\sigma^{(t)}_{K,N}(\theta)$ are defined by
\begin{equation*}
\sigma^{(t)}_{K,N}(\theta):= 
\begin{cases}
\infty& {\rm if}\ K\theta^2\geq N\pi^2, \crcr
\frac{\sin(t\theta\sqrt{K/N})}{\sin(\theta\sqrt{K/N})} & {\rm if}\ 0<K\theta^2<N\pi^2, \crcr
t & {\rm if}\, K\theta^2<0\, {\rm and}\, N=0,\, {\rm or\, if}\,K\theta^2=0, \crcr
\frac{\sinh(t\theta\sqrt{-K/N})}{\sinh(\theta\sqrt{-K/N})} & {\rm if}\ K\theta^2\leq0\, {\rm and}\, N>0. \crcr
\end{cases}
\end{equation*}
Finally, given $K \in \Real$,  $N \in (1,\infty]$  and $(t,\theta)\in{[0,1]\times\R_+}$, 
$\tau_{K,N}^{(t)}(\theta) := t^{\frac{1}{N}} \sigma_{K,N-1}^{(t)}(\theta)^{1 - \frac{1}{N}}$.
When $N=1$, set $\tau^{(t)}_{K,1}(\theta) = t$ if $K \leq 0$ and $\tau^{(t)}_{K,1}(\theta) = +\infty$ if $K > 0$.

\begin{definition}\label{D:sturmII}[\cite{sturm:II}]
Given two numbers $K,N\in{\mathbb{R}}$ with $N\geq1$ we say that a metric measure space $(X,\sfd, \m)$ satisfies the \emph{curvature-dimension condition} $\CD(K,N)$ if and only if for each pair of $\mu_0,\mu_1\in{\mathcal{P}_2(X,\sfd,\m)}$ there exist an optimal coupling $\pi$ of $\mu_0=\rho_0\m$ and $\mu_1=\rho_1\m$ and a $W_2$-geodesic $\{\mu_t\}$ interpolating the two such that
\begin{equation}
\label{equ:cdd}
S_{N'}(\mu_t | \m) \leq -\int_{X\times X} \bigl[\tau_{K,N'}^{(1-t)}(\sfd(x,y)) \rho_0^{-1/N'}(x)+\tau_{K,N'}^{(t)}(\sfd(x,y)) \rho_1^{-1/N'}(y) \bigr] d\pi(x,y)
\end{equation}
for all $t\in{[0,1]}$ and all $N'\geq N$.
\end{definition}

One can also prescribe the convexity inequality \eqref{equ:cdd} to hold along a $W_p$-geodesic, getting to the more general definition of  $\CD_{p}(K,N)$.
In this note we will deal with the case $p=1$, that, due to the lack of 
strict convexity of the exponent, needs a more refined definition.

\begin{definition}\label{D:CD1entropy}
Given two numbers $K,N\in{\mathbb{R}}$ with $N\geq1$ we say that 
a metric measure space $(X,\sfd, \m)$ satisfies the  $\CD_{1}(K,N)$ 
if and only if for each pair of $\mu_0,\mu_1\in{\mathcal{P}_1(X,\sfd,\m)}$ there exists a Borel probability measure $\ppi\in\mathcal P(C([0,1],X))$ concentrated on constant speed geodesics, such that $\int\sfd(\gamma_0,\gamma_1)\, d\ppi(\gamma)=W_1(\mu_0,\mu_1)$ for which the inequality
\begin{equation}
\label{equ:cd}
S_{N'}(\mu_t | \m) \leq -\int_{X\times X} \bigl[\tau_{K,N'}^{(1-t)}(\sfd(\gamma_0,\gamma_1)) \rho_0^{-1/N'}(\gamma_0)+\tau_{K,N'}^{(t)}(\sfd(\gamma_0,\gamma_1)) \rho_1^{-1/N'}(\gamma_1) \bigr] d\ppi(\gamma)
\end{equation}
holds for all $t\in{[0,1]}$ and all $N'\geq N$, 
where $\mu_t:=({\rm e}_t)_*\ppi$ and $\mu_{t} \ll \mm$ and 
$(\ee_{i})_{\sharp} \ppi = \mu_{i}$ for $i = 0,1$.
\end{definition}
\begin{remark}
Notice that since we are dealing with the 1-transportation distance, there are dynamic transport plans which are not concentrated on constant speed geodesics. Insisting on this property in the definition above seems the natural choice to make in connection with the analogous definitions for $p>1$, see e.g.\ Lemma \ref{le:32}.
\end{remark}

We now recall the definition of the  
$\CD^{1}(K,N)$ condition introduced in \cite{CMi} and 
based on another principle: the localization of Ricci curvature lower bounds along 
integral curves associated to 1-Lipschitz function.

\begin{definition}\label{D:defCD1}
($\CD^1(K,N)$ when $\supp(\m)=X$) Let $(X,\sfd,\m)$ be a metric measure space such that $\supp(\m)=X$. Let us consider $K,N\in{\mathbb{R}}$, $N>1$ and let $u:(X,\sfd)\to \mathbb{R}$ be a 1-Lipschitz function. We say that $(X,\sfd,\m)$ satisfies the $\CD^1_u(K,N)$ condition if there exists a family $\{X_{\alpha}\}_{\alpha\in{Q}}\subset X$ such that :

\begin{enumerate}
\item[(1)]
There exists a disintegration of $\m\llcorner_{\mathcal{T}_{u}}$ on $\{X_{\alpha}\}_{\alpha\in{Q}}$:
\[
\m\llcorner_{\mathcal{T}_{u}}= \int_{Q} \m_{\alpha}\, \mathfrak{q}(d\alpha), \,\,\,\text{where}\, \m_{\alpha}(X_{\alpha})=1,\,\, \text{for}\,\mathfrak{q}\text{-a.e.} \,\alpha\in{Q}.
\]

\item[(2)] For $\mathfrak{q}$-a.e. $\alpha\in{Q}$, $X_{\alpha}$ is a \emph{transport ray} for $\Gamma_u$. 


\item[(3)] For $\mathfrak{q}$-a.e. $\alpha\in{Q}$, the metric measure space $(X_{\alpha},\sfd,\m_{\alpha})$ satisfies $\CD(K,N)$.
\end{enumerate}
We say that $(X,\sfd,\m)$ satisfies the $\CD^1(K,N)$ condition it is satisfies the $\CD^1_u(K,N)$ condition for every $u:X\to\R$ 1-Lipschitz.
\end{definition}

By transport ray, we mean that $X_{\alpha}$ is the image of a closed non-null geodesic $\gamma$ parametrized by arc length on an interval $I$ in such a way the function $u\circ\gamma$ is affine with slope $-1$ on $I$, moreover it is maximal with respect to inclusion.

\begin{remark}
\label{rem:cd1dim}
 It is well known  that 
the last condition of Definition \ref{D:defCD1} is equivalent to  
ask $\mm_{\alpha} \sim h_{\alpha} \mathcal{L}^{1}\llcorner_{[0,|X_{\alpha}|]}$ 
where $|X_{\alpha}|$ denotes the length of the transport ray $X_{\alpha}$ ($\sim$ means up to isometry of the space)  and the density $h_{\alpha}$ has to satisfy 
\begin{equation}
\label{eq:distrCD}
\left(h_{\alpha}^{1/(N-1)}\right)'' + \frac{K}{N-1}h_{\alpha}^{1/(N-1)}\leq 0,
\end{equation}
in the distributional sense. In turn this is equivalent to the fact that the continuous representative of $h_\alpha$ - which exists by \eqref{eq:distrCD} and that we shall continue to denote by $h_\alpha$ - satisfies 
\[
h_{\alpha}((1-t)R_0+tR_1)^{\frac{1}{N-1}}\geq \sigma_{K,N-1}^{(1-t)}(R_1-R_0)h_{\alpha}(R_0)^{\frac{1}{N-1}}+\sigma_{K,N-1}^{(t)}(R_1-R_0)h_{\alpha}(R_1)^{\frac{1}{N-1}},
\]
for any $R_0,R_1\in[0,|X_\alpha|]$, $R_0\leq R_1$, and $t\in[0,1]$.
\end{remark}

\section{Equivalent Formulations of Ricci Curvature bounds}\label{S:Main}

In this section we obtain the equivalence between $\CD^{1}(K,N)$ and $\CD_{1}(K,N)$.
Recall that to avoid pathologies we assume $\supp (\mm) = X$.

\begin{theorem}\label{T:Main}
Let $(X,\sfd,\m)$ be an essentially non-branching metric measure space
and further assume $\mm(X) = 1$. 
Then $(X,\sfd,\m)$ satisfies the $\CD^1(K,N)$ condition if and only if 
it satisfies the $\CD_{1}(K,N)$ condition.
\end{theorem}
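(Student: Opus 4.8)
The plan is to prove the two implications separately, using the localization machinery of Section~\ref{Ss:1OTu} as the bridge: in both directions the key object is the disintegration of $\mm\llcorner_{\T_u}$ into transport rays $\{X_\alpha\}_{\alpha\in Q}$ associated to a $1$-Lipschitz Kantorovich potential $u$, together with the one-dimensional densities $h_\alpha$ and the characterisation recalled in Remark~\ref{rem:cd1dim}. The heuristic is that a $W_1$-geodesic from $\mu_0$ to $\mu_1$ is, after choosing an optimal potential $u$, nothing but a "ray-by-ray" transport: on $\qq$-a.e.\ ray $X_\alpha$ one has a genuine one-dimensional monotone transport of $\mu_0\llcorner_{X_\alpha}$ onto $\mu_1\llcorner_{X_\alpha}$, and the convexity inequality \eqref{equ:cd} decomposes into the one-dimensional inequalities that define $\CD(K,N)$ on each $(X_\alpha,\sfd,\mm_\alpha)$.

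\textbf{$\CD^1(K,N)\Rightarrow\CD_1(K,N)$.}
First I would fix $\mu_0=\rho_0\mm$, $\mu_1=\rho_1\mm\in\P_1(X,\sfd,\mm)$ and choose a Kantorovich potential $u$ for the $W_1$-problem between them, so that any $W_1$-optimal plan is concentrated on $\Gamma_u$. Since $\mm(A_{+,u})=\mm(A_{-,u})=0$ (Theorem~\ref{teo:branch}, available because essential non-branching plus $\CD^1(K,N)$ — which forces $\MCP(K,N)$ ray-by-ray and hence existence/uniqueness of $W_2$-optimal maps — gives its hypotheses), we may work on $\T_u^b$ and use the disintegration \eqref{E:disint}/\eqref{E:disint} provided by $\CD^1_u(K,N)$, identifying each $X_\alpha$ with an interval $[0,|X_\alpha|]$ carrying $h_\alpha\,\mathcal L^1$. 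On this interval the $W_1$-transport between the conditional measures is the unique monotone rearrangement; I would build the dynamical plan $\ppi_\alpha$ on $\Geo(X)$ following the ray, set $\ppi:=\int_Q\ppi_\alpha\,\qq(d\alpha)$, and check that $\int\sfd(\gamma_0,\gamma_1)\,d\ppi=W_1(\mu_0,\mu_1)$ and $(\ee_i)_\sharp\ppi=\mu_i$. The interpolant $\mu_t=(\ee_t)_\sharp\ppi$ then also disintegrates ray-by-ray, and on each ray the inequality \eqref{equ:cd} is exactly the one-dimensional $\CD(K,N)$ estimate, which holds by Remark~\ref{rem:cd1dim} applied with $R_0,R_1$ the endpoints of the transport on that ray; integrating in $\alpha$ gives \eqref{equ:cd}. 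Two points require care: verifying $\mu_t\ll\mm$ (this follows from the fact that on each ray the density $h_\alpha$ is strictly positive in the interior and the monotone map is non-degenerate, so no mass concentrates), and the measurability/selection of the family $\alpha\mapsto\ppi_\alpha$.

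\textbf{$\CD_1(K,N)\Rightarrow\CD^1(K,N)$.}
Here I would fix a $1$-Lipschitz $u$ and aim to produce the family $\{X_\alpha\}$ with properties (1)--(3). The disintegration of $\mm\llcorner_{\T_u}$ into transport rays, together with (1) and (2), is \emph{geometric} and comes for free from Section~\ref{Ss:1OTu} (after removing the $\mm$-null branching set), so the content is (3): for $\qq$-a.e.\ $\alpha$, $(X_\alpha,\sfd,\mm_\alpha)$ satisfies $\CD(K,N)$, equivalently \eqref{eq:distrCD} holds for $h_\alpha$. To extract this I would feed $\CD_1(K,N)$ with suitable one-dimensional test pairs: choose $\mu_0,\mu_1$ supported on a single ray (or on a thin tube of rays), oriented so that $u$ is an optimal potential and the $W_1$-geodesic is forced to move \emph{along} the rays; then \eqref{equ:cd} for such pairs, after disintegrating and localising to $\qq$-a.e.\ ray, yields the concavity inequality of Remark~\ref{rem:cd1dim} for $h_\alpha$ at the chosen pair of points, and a density argument over rational endpoints upgrades it to the full distributional inequality. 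The only non-obvious step is that the dynamical plan $\ppi$ furnished by $\CD_1(K,N)$, being $W_1$-optimal, is concentrated on geodesics lying inside the rays of $u$ — but this is exactly the rigidity of $L^1$-optimal transport: any geodesic $\gamma$ with $(\gamma_0,\gamma_1)\in\Gamma_u$ stays in $R_u$ for all times, as recorded right after \eqref{E:R}.

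\textbf{Main obstacle.}
I expect the crux to be the $\CD_1\Rightarrow\CD^1$ direction, specifically the interplay between the measurable selection of rays and the \emph{localisation} of the integrated inequality \eqref{equ:cd} to $\qq$-a.e.\ single ray: one must choose the test measures $\mu_0,\mu_1$ (and their orientation relative to $u$) so that the $W_1$-geodesic provided by the definition is genuinely ray-wise, argue that it is uniquely determined by monotone rearrangement on each ray, and then disintegrate the inequality without losing information — a standard but delicate "doubling of variables / exhaustion by countably many potentials" argument of the type used in \cite{CMi}. The forward direction is more routine once the conditional dynamical plans are glued measurably; the absolute continuity $\mu_t\ll\mm$ and the compatibility of the gluing with the Wasserstein distance are the only places where a short computation is genuinely needed.
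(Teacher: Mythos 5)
Your plan reproduces the paper's architecture in both directions: localise along the transport rays of a Kantorovich potential using the disintegration of Section~\ref{Ss:1OTu}, reduce to one-dimensional concavity estimates on the ray densities $h_\alpha$ (Remark~\ref{rem:cd1dim}), and glue (resp.\ localise) the entropy inequality ray by ray. In the $\CD^1\Rightarrow\CD_1$ direction your sketch is essentially the paper's proof, modulo one detail worth recording: before one can speak of transporting $\mu_0\llcorner_{X_\alpha}$ onto $\mu_1\llcorner_{X_\alpha}$ one must check that the quotient marginals agree, $\qq_0=f_\sharp\mu_0=f_\sharp\mu_1=\qq_1$; the paper proves this by observing that on the non-branched set $\T_u^b$ the relation $\Gamma_u$ never connects distinct equivalence classes, and without this the ray-by-ray coupling is ill-posed.

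In the harder direction $\CD_1\Rightarrow\CD^1$ there is a genuine gap: you assert that the disintegration of $\mm\llcorner_{\T_u}$ into rays ``comes for free from Section~\ref{Ss:1OTu} (after removing the $\mm$-null branching set),'' but the negligibility of $A_{\pm,u}$ is not free under the hypotheses at hand. Theorem~\ref{teo:branch} needs existence and uniqueness of $W_2$-optimal maps, which (via \cite{CavallettiMondino17c}) requires some a priori curvature input such as $\MCP(K,N)$, and one must also know that the conditional measures are $\mm_\alpha=h_\alpha\H^1$ with $h_\alpha$ continuous and positive before the pointwise inequalities of Remark~\ref{rem:cd1dim} even make sense. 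The paper closes this gap with Lemma~\ref{le:32}: one shows directly that $\CD_1(K,N)$ implies a version of $\MCP(K,N)$ by letting $\mu_{1,\ve}=c_\ve\,\mm\llcorner_{B_\ve(x_1)}$, extracting a limit of the dynamical plans furnished by Definition~\ref{D:CD1entropy} via properness, and using lower semicontinuity of the entropy; this $\MCP$, together with essential non-branching, then delivers both the $\mm$-negligibility of the branching set and the intermediate regularity of $h_\alpha$ (Step 2 of the paper). You correctly identify the analogous fact in the forward direction ($\CD^1$ gives $\MCP$ ray by ray), but you never derive the corresponding input in the backward one, and without it Steps~2 and~3 of the paper's argument cannot even be stated. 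Beyond that, your ``rational endpoints density argument'' for upgrading \eqref{equ:cd} to the pointwise ray inequality is in the same spirit as, but not identical to, the paper's route, which first proves a ``$\sup$ vs.\ $\inf$'' estimate over a set $\bar Q$ of rays (Lemma following Step~2) and then contradicts it using Lusin's theorem to make the densities simultaneously continuous in $\alpha$; both mechanisms are legitimate, but the paper's makes the measurable-selection concerns you flag explicit.
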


We will present separately the two implications needed for the proof of Theorem \ref{T:Main}.

\subsection{$\CD_{1}(K,N) \implies \CD^1(K,N)$}
So consider fixed $u : X \to \R$ a $1$-Lipschitz function and $(X,\sfd,\mm)$ be 
essentially non-branching and verifying $\CD_{1}(K,N)$ with $\mm(X) = 1$.

\smallskip
   
{\bf Step 1.} Disintegration formula.\\
First notice that $\CD_{1}(K,N)$ implies, reasoning for instance like  
\cite{sturm:II} in the case $p =2$, that the space is proper.
Moreover $\CD_{1}(K,N)$ implies the following variant of $\MCP(K,N)$ 
(for the definition of $\MCP$ we refer to \cite{sturm:II} and \cite{Ohta1}): 
%
\begin{lemma}\label{le:32}
Let $(X,\sfd,\mm)$ be a m.m.s. with $\mm(X) = 1$ and satisfying $\CD_1(K,N)$. 
Then $(X,\sfd,\mm)$ satisfies the following version of $\MCP(K,N)$. 
For any $\mu_{0} \in \mathcal{P}(X)$ with $\mu_{0} \ll \mm$ and $x_{0}\in X$
there exists a curve $(\mu_t)$ which is a $W_{p}$-geodesic for any $p\in[1,\infty)$ 
such that $\mu_{t}= \rho_{t} \mm + \mu_{t}^{s}$ for all $t \in [0,1)$ then
\begin{equation}\label{E:MCP}
\int_{X} \rho_{t}^{-1/N'} \mu_{t} \geq \int_{X} \tau^{(1-t)}_{K,N'}(\sfd(x,x_{0}))  \rho_{0}^{-1/N'}(x) \mu_{0}(dx),
\end{equation}
for all $t \in [0,1)$ and $N'\geq N$.
\end{lemma}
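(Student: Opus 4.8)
The plan is to derive the $\MCP$-type inequality \eqref{E:MCP} directly from the $\CD_1(K,N)$ convexity inequality \eqref{equ:cd} by taking $\mu_1$ to be a Dirac-like limit at $x_0$, i.e.\ by approximating a point mass with $\mm$-absolutely continuous measures. Concretely, given $\mu_0 = \rho_0 \mm \ll \mm$ and $x_0 \in X$, I would fix a sequence $\mu_1^{(n)} := \mm\llcorner_{B_{1/n}(x_0)} / \mm(B_{1/n}(x_0))$, which is $\mm$-absolutely continuous and converges weakly to $\delta_{x_0}$ (here $\supp(\mm) = X$ guarantees $\mm(B_{1/n}(x_0)) > 0$). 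For each $n$, apply Definition \ref{D:CD1entropy} to the pair $(\mu_0, \mu_1^{(n)})$ to obtain a dynamical plan $\ppi^{(n)}$ concentrated on constant speed geodesics with $\int \sfd(\gamma_0,\gamma_1)\,d\ppi^{(n)} = W_1(\mu_0,\mu_1^{(n)})$, and with $\mu_t^{(n)} := (\ee_t)_\sharp \ppi^{(n)} \ll \mm$, satisfying \eqref{equ:cd}. Discarding the nonnegative $\tau^{(t)}_{K,N'}(\cdot)\rho_1^{-1/N'}(\gamma_1)$ term on the right-hand side (it only helps the inequality, since we want a lower bound on $-S_{N'}$, equivalently an upper bound on $S_{N'}$ that we then use one-sidedly) gives
\[
S_{N'}(\mu_t^{(n)} \mid \mm) \leq - \int \tau^{(1-t)}_{K,N'}(\sfd(\gamma_0,\gamma_1))\,\rho_0^{-1/N'}(\gamma_0)\, d\ppi^{(n)}(\gamma),
\]
that is, $\int \rho_t^{(n),-1/N'}\, d\mu_t^{(n)} \geq \int \tau^{(1-t)}_{K,N'}(\sfd(\gamma_0,\gamma_1))\,\rho_0^{-1/N'}(\gamma_0)\, d\ppi^{(n)}(\gamma)$.

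Next I would pass to the limit $n \to \infty$. The left-hand side $\int \rho_t^{(n),-1/N'}\,d\mu_t^{(n)} = -S_{N'}(\mu_t^{(n)}\mid\mm)$ should be controlled using lower semicontinuity of $-S_{N'}$ (equivalently upper semicontinuity of $S_{N'}$) with respect to weak convergence plus tightness/properness — properness of $X$ follows from $\CD_1(K,N)$ as noted in the text. One needs $\mu_t^{(n)} \weak \mu_t$ for a limiting geodesic $(\mu_t)$; this follows from extracting a weakly convergent subsequence of $\ppi^{(n)}$ in $\mathcal P(C([0,1],X))$ (tight because the endpoints are tight and the geodesics are equi-Lipschitz on a bounded region), calling the limit $\ppi$, and setting $\mu_t := (\ee_t)_\sharp\ppi$; then $(\ee_0)_\sharp\ppi = \mu_0$ and $(\ee_1)_\sharp\ppi = \delta_{x_0}$, so $\ppi$ is supported on geodesics ending at $x_0$, whence $\sfd(\gamma_0,\gamma_1) = \sfd(\gamma_0,x_0)$. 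For the right-hand side, since $\rho_0$ is fixed and $\tau^{(1-t)}_{K,N'}(\sfd(\cdot,x_0))$ is continuous in the relevant regime, the integral $\int \tau^{(1-t)}_{K,N'}(\sfd(\gamma_0,x_0))\rho_0^{-1/N'}(\gamma_0)\,d\ppi^{(n)}$ converges (or at least has the correct liminf) to $\int_X \tau^{(1-t)}_{K,N'}(\sfd(x,x_0))\rho_0^{-1/N'}(x)\,\mu_0(dx)$ because the $\gamma_0$-marginal of every $\ppi^{(n)}$ is exactly $\mu_0$. Finally, I would record that the limit curve $(\mu_t)$ is simultaneously a $W_p$-geodesic for all $p \in [1,\infty)$: this is because it is obtained as $(\ee_t)_\sharp\ppi$ with $\ppi$ concentrated on geodesics all hitting the single point $x_0$ at time $1$, and for such ``fan'' plans the $W_p$-cost from $\mu_0$ to $\delta_{x_0}$ equals $\left(\int \sfd(x,x_0)^p\,\mu_0(dx)\right)^{1/p}$ and is realized by $\ppi$ for every $p$ — a standard fact for transport to a point.

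The main obstacle I anticipate is the semicontinuity/integrability bookkeeping in the limit: one must ensure $\mu_t^{(n)} \ll \mm$ passes in a usable form to the limit (the inequality \eqref{E:MCP} is stated with $\mu_t = \rho_t\mm + \mu_t^s$, so we only need the a.c.\ part $\rho_t$, which is benign), and one must justify that $-S_{N'}$ does not jump down in the limit — this is where the Fatou-type / lower semicontinuity argument for the functional $\mu \mapsto \int \rho^{-1/N'}\,d\mu$ along weakly converging measures with uniformly bounded support is invoked; this functional is indeed lower semicontinuous (it is $+\infty$-valued convexity of $z\mapsto z^{-1/N'}$ combined with Joint convexity), so the passage should go through. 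A secondary point is handling the case $K>0$ where $\tau^{(1-t)}_{K,N'}$ can be $+\infty$: there the inequality \eqref{E:MCP} is trivially satisfied or one restricts to where $\sfd(x,x_0) < D_{K,N'-1}$, exactly as in the classical treatment. Apart from these standard technicalities, the argument is the natural specialization of the $\CD_1$ convexity inequality to transport onto a point, mirroring the $p=2$ derivation of $\MCP(K,N)$ from $\CD(K,N)$.
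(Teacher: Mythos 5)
Your proposal follows essentially the same route as the paper's proof: approximate $\delta_{x_0}$ by normalized restrictions of $\mm$ to small balls around $x_0$, apply the $\CD_1(K,N)$ inequality to each pair, drop the nonnegative term involving $\rho_1$, extract a limiting dynamical plan using properness-induced precompactness, and conclude via lower semicontinuity of the entropy. The paper's proof is considerably terser, but the underlying argument is the same; your additional observations about the ``fan'' structure guaranteeing the $W_p$-geodesic property for all $p$ simply spell out what the paper leaves implicit.
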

\begin{proof}
Let $\mu_{0} \in \mathcal{P}(X,\sfd,\mm)$ and $x \in X$ be given. Since $\supp (\mm) = X$, 
we can consider $\mu_{1,\ve} : = c_{\ve}\mm\llcorner_{B_{\ve}(x_{1})}$, with 
$c_{\ve} > 0$ normalisation constant.
Let $\ppi_{\ve}$ be given by Definition \ref{D:CD1entropy} and put 
$\mu_t,\ve:=({\rm e}_t)_*\ppi_{\ve}$. It is classical to check that properness of $X$ 
implies that $\ppi_{\ve}$ is precompact and therefore we can obtain a limit 
dynamical plan $\ppi$ inducing a geodesic from $\mu_{0}$ to $\delta_{0}$. 
Validity of \eqref{E:MCP} simply follows by lower semicontinuity of entropy and the claim 
follows.
\end{proof}

The version of $\MCP(K,N)$ obtained in Lemma \ref{le:32} is actually equivalent to 
the classical one, provided the space is essentially non-branching: we refer for its 
proof to \cite[Lemma 6.13]{CMi} (see also \cite[Section 5]{R2012}). 
Hence in our framework we can directly use the classical $\MCP(K,N)$.

Immediately we deduce that $X$ is a geodesic space. Hence, as discussed in Section \ref{Ss:1OTu}, the following disintegration formula is valid: 
\begin{equation}\label{E:disint2}
\m \llcorner_{\T_{u}} = \m \llcorner_{\T^b_{u}}= \int_{Q} \m_{\alpha} \, \mathfrak{q}(d\alpha),
\end{equation}
where  $\qq = f_{\sharp} (\m \llcorner_{\T_{u}^b})$, and for 
$\qq$-a.e. $\alpha \in Q$
we have $\mm_{\alpha} \in \mathcal{P}(X)$, 
with $\mm_{\alpha}(X \setminus X_{\alpha}) = 0$: the notation $X_{\alpha}$ is used to denote the equivalence class of the element $\alpha \in Q$ that is, in particular, a transport ray.
Notice that  the first identity follows from the essentially non-branching assumption and
the discussion after Theorem \ref{teo:branch}.

Hence it is only left to show that $(X_{\alpha},\sfd,\mm_{\alpha})$ satisfy $\CD(K,N)$.

\smallskip
{\bf Step 2.} Intermediate regularity of conditional measures.\\
It is already present in the literature how to improve the validity of \eqref{E:MCP} 
to any $\mu_{1} \in \mathcal{P}(X)$, provided the space is essentially non-branching and the 
geodesic $(\mu_{t})_{t\in [0,1]}$ is  a $W_{2}$-geodesic. 

This will be enough to deduce a first result on the regularity of $\mm_{\alpha}$.
Indeed localization for $\MCP(K,N)$ was, in a different form, already known 
in 2009, see \cite[Theorem 9.5]{biacava:streconv}, for non-branching m.m.s.. 
The case of essentially non-branching m.m.s.'s and an effective reformulation (after the work of Klartag \cite{klartag}) have been recently discussed in \cite[Section 3]{CM18} to which we refer for all the missing details (see in particular \cite[Theorem 3.5]{CM18}). 
Here we briefly report the following fact:

\smallskip
If $(X,\sfd,\mm)$ is an essentially non-branching m.m.s. with $\supp(\mm) = X$ and 
satisfying $\MCP(K,N)$, for some $K\in \R, N\in (1,\infty)$,  
then, for $\qq$-a.e.$\alpha$, $\mm_{\alpha}=h_{\alpha} \cH^{1}\llcorner_{X_{\alpha}}$ and 
the one-dimensional metric measure space $(X_{\alpha},\sfd,\m_{\alpha})$  
verifies $\MCP(K,N)$; in particular 
$h_{\alpha}$ is strictly positive in the relative interior of $X_{\alpha}$ and locally Lipschitz.

\smallskip
{\bf Step 3.} $\CD(K,N)$ estimates for one-dimensional spaces.\\
In order to conclude, it remains to show that  for $\qq$-a.e. $\alpha\in{Q}$, the one-dimensional metric measure space $(X_{\alpha},\sfd,\m_{\alpha})$ satisfies $\CD(K,N)$. 
It is useful to introduce the following  \emph{ray map} 
$g: \dom(g)\subset Q\times \mathbb{R} \to \T_{u}$, defined as follows:
\begin{equation*}
\begin{split}
 \gra(g):=&\big\{(\alpha,t,x)\in{Q\times[0,+\infty)}\times \T_{u}^{b}: (\alpha,x)\in{\Gamma}, \sfd(\alpha,x)=t
 \big\}
 \\ 
 &\cup \, \big\{(\alpha, t,x)\in{Q\times (-\infty,0]\times\T_{u}^{b}}:(x,\alpha)\in{\Gamma}, 
 \sfd(x,\alpha)=t\big\}.
\end{split}
\end{equation*}
\noindent
The ray map $g$ enjoys several  properties 
already obtained in \cite[Proposition 5.4]{cava:MongeRCD}:
\begin{itemize}
\item $g$ is a Borel map;
\item $t\mapsto g(\alpha,t)$ is an isometry. If $s,t\in{Dom(\alpha, \cdot)}$ with $s\leq t$, then $(g(\alpha,s),g( \alpha,t))\in{\Gamma}$;
\item $\dom(g)\ni (\alpha, t)\mapsto g(\alpha,t)$ is bijective on $f^{-1}(Q)\subset \T_{u}^b$.
\end{itemize}
In particular, via $g$ we will identify the set of definition of the densities $h_{\alpha}$ 
with real intervals.

We start with  the following preliminary result.

\begin{lemma}
For any $\bar{Q}\subseteq Q$  Borel set with positive $\qq$-measure and  for  $R_0,R_1,L_0,L_1\in{\mathbb{R}}$ such that $R_0<R_1$, $L_0,L_1>0$ and
$[R_{0}, R_{1} + L_{1}]$ belongs to the domain of $\qq$-a.e. $h_{\alpha}$,  it holds:
\begin{align}\label{equ:firstclaim}
& (L_t)^{\frac{1}{N}}\sup_{\bar{Q}}h_{\alpha}^{\frac{1}{N}}(R_t)\nonumber \\
& \geq (L_0)^{\frac{1}{N}} \tau_{K,N}^{(1-t)}(\sfd(R_0,R_1))\inf_{\bar{Q}}h_{\alpha}^{\frac{1}{N}}(R_0)+(L_1)^{\frac{1}{N}} \tau_{K,N}^{(t)}(\sfd(R_0,R_1))\inf_{\bar{Q}}h_{\alpha}^{\frac{1}{N}}(R_1),
\end{align}
for every $t\in{[0,1]}$, where $R_t=(1-t)R_0+tR_1$ (the same  holds for $L_t$).
\end{lemma}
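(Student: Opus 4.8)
The plan is to deduce \eqref{equ:firstclaim} from the $\CD_1(K,N)$ inequality \eqref{equ:cd} by choosing a suitable pair of marginals $\mu_0,\mu_1$ supported on thin tubes around the two cross-sections of the bundle of rays indexed by $\bar Q$, and then letting the tubes shrink. Concretely, using the ray map $g$ I would fix small $\delta>0$ and set $A_i^\delta := g(\bar Q \times [R_i - \delta, R_i + \delta])$ for $i=0,1$, and define $\mu_i := \mm(A_i^\delta)^{-1}\,\mm\llcorner_{A_i^\delta}$. The key structural point is that, because each $X_\alpha$ is a transport ray and $u\circ\gamma$ is affine of slope $-1$, the unique $W_1$-optimal way of moving $\mu_0$ to $\mu_1$ (up to the ambiguity intrinsic to $W_1$) moves mass \emph{along} the rays; the essentially non-branching hypothesis together with the disintegration \eqref{E:disint2} forces any $\ppi$ as in Definition \ref{D:CD1entropy} to be carried by geodesics lying inside the rays $X_\alpha$. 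Hence the intermediate measure $\mu_t = (\ee_t)_\sharp\ppi$ will be (essentially) concentrated on $g(\bar Q\times[R_t-\delta', R_t+\delta'])$ for a comparable $\delta'$, and along $\qq$-a.e.\ ray the transport is the one-dimensional translation carrying $[R_0-\delta,R_0+\delta]$ and $[R_1-\delta,R_1+\delta]$ onto a neighbourhood of $R_t$.

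Next I would compute the three terms of \eqref{equ:cd} in the limit $\delta\to 0$. On each ray $\mm_\alpha = h_\alpha\,\haus^1\llcorner_{X_\alpha}$ by Step 2, so the densities $\rho_0,\rho_1,\rho_t$ of $\mu_0,\mu_1,\mu_t$ with respect to $\mm$ are, after the identification via $g$, given by the one-dimensional Jacobian/density quotients; as $\delta\to0$ the mass normalisations and the lengths $L_i$ enter exactly through the factors $(L_i)^{1/N}$ and the values $h_\alpha(R_i)$, because the length element $L_i$ encodes how the reference cross-section measure $\qq$ is weighted against $\haus^1$. Using continuity and strict positivity of $h_\alpha$ on the relative interior of $X_\alpha$ (again Step 2), the right-hand side of \eqref{equ:cd}, after dividing by $\qq(\bar Q)$ and passing to the limit, is bounded below by the $\bar Q$-infimum of the $h_\alpha$-quantities, giving the $\inf_{\bar Q}$ terms; while the left-hand side $S_{N'}(\mu_t|\mm)$, being $-\int \rho_t^{-1/N'}d\mu_t$, is bounded above in terms of the $\bar Q$-supremum of $h_\alpha^{1/N}(R_t)$, producing the $\sup_{\bar Q}$ factor on the left. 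Taking $N'=N$ and reorganising yields \eqref{equ:firstclaim}; the $t\in[0,1]$ and the general $R_t=(1-t)R_0+tR_1$ come for free since the midpoint of the one-dimensional transport sits precisely at $R_t$.

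The main obstacle I expect is the localization/measurability bookkeeping needed to justify that $\ppi$ really decomposes along the rays and that the shrinking tubes produce, in the limit, \emph{pointwise} (in $\alpha$) one-dimensional estimates rather than merely an averaged one. Two issues must be handled carefully: first, $W_1$-optimal dynamical plans are highly non-unique, so one must argue that \emph{some} valid $\ppi$ (the one furnished by $\CD_1(K,N)$) can be taken to respect the ray decomposition — here the essentially non-branching assumption, the fact that $A_{\pm,u}$ are $\mm$-null (Theorem \ref{teo:branch} via $\MCP(K,N)$ from Lemma \ref{le:32}), and that $\mu_t\ll\mm$ are the crucial inputs, forcing the carrying geodesics to stay inside the equivalence classes. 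Second, extracting a per-ray inequality from an integrated one requires a disintegration-and-exhaustion argument: one runs the above for every Borel $\bar Q$ of positive measure (as in the statement) and with a countable dense family of parameters $R_0<R_1$, $L_0,L_1$, and $t$, then removes a $\qq$-null set; the uniform choice of tubes and the boundedness coming from properness make the limiting passage (dominated convergence for the entropy integrals, lower semicontinuity where needed) routine once the ray-decomposition of $\ppi$ is in place. I would also need the elementary fact, recorded in Remark \ref{rem:cd1dim}, connecting $\sigma_{K,N-1}^{(t)}$ with $\tau_{K,N}^{(t)}$, so that the one-dimensional $\sigma$-type bound for $h_\alpha$ obtained this way matches the $\tau$-coefficients appearing in \eqref{equ:firstclaim}.
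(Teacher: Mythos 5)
Your overall strategy — build two marginals concentrated on thin tubes around the cross-sections $\{R_0\}$ and $\{R_1\}$ over $\bar Q$, use essential non-branching and Theorem~\ref{teo:branch} to force any admissible $\ppi$ from Definition~\ref{D:CD1entropy} to decompose along the rays, estimate the two sides of~\eqref{equ:cd} ray-by-ray, and pass to the limit as the tubes shrink — is the same as the paper's, and the localization argument you sketch (via $\MCP$, $\mm$-nullity of $A_{\pm,v}$, and $\mu_t\ll\mm$) is essentially the paper's Step~1.

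However, there is a genuine gap in how the tubes are built, and it is not a bookkeeping issue. You set $A_i^\delta := g(\bar Q\times[R_i-\delta,R_i+\delta])$ and $\mu_i := \mm(A_i^\delta)^{-1}\mm\llcorner_{A_i^\delta}$, so both tubes have the \emph{same} width $2\delta$ and the parameters $L_0,L_1$ never enter the construction at all. You then assert that ``the lengths $L_i$ enter exactly through the factors $(L_i)^{1/N}$ \ldots because the length element $L_i$ encodes how $\qq$ is weighted against $\haus^1$,'' but this is not the case: with symmetric equal-width tubes nothing in the limit produces an independent $(L_0)^{1/N}$ and $(L_1)^{1/N}$. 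At best your computation yields~\eqref{equ:firstclaim} with $L_0=L_1$ (and thus $L_t$ constant), which is strictly weaker and useless for the next step, since the whole point of the subsequent Proposition is to choose $L_0,L_1$ \emph{differently}, as functions of $h_\alpha(R_0)$, $h_\alpha(R_1)$, $t$, and the $\sigma$-coefficients, to pass from the $\tau$-type bound to the $\sigma$-type bound~\eqref{equ:claimcd}. In the paper the lengths enter by construction: the marginals are pushed forward from the \emph{flat} measures $\tfrac{1}{\varepsilon L_i}\mathcal{L}^1\llcorner_{[R_i,R_i+\varepsilon L_i]}$, whose interval lengths are $\varepsilon L_0$ and $\varepsilon L_1$, and then $(\varepsilon L_i)^{1/N}$ appears from the Jensen/H\"older step. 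Taking $\mm\llcorner_{A_i^\delta}$ (i.e.\ density $\propto h_\alpha$ on each ray) instead of the uniform $\mathcal{L}^1$ also makes the one-dimensional monotone rearrangement nonlinear in $s$, losing the affine structure that the paper uses implicitly to identify the intermediate interval as $[R_t,R_t+\varepsilon L_t]$. Finally, the appeal to Remark~\ref{rem:cd1dim} connecting $\sigma_{K,N-1}$ and $\tau_{K,N}$ does not belong in the proof of this lemma: the lemma itself is stated purely with $\tau$-coefficients, and the $\sigma$-conversion happens only afterwards, by the special choice of $L_0,L_1$ in the following Proposition — a choice your construction cannot accommodate.
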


\begin{proof}
{\bf Step 1.}\\
Fix $\bar{Q}\subseteq Q$  Borel set with positive $\qq$-measure and consider $R_0,R_1,L_0,L_1\in{\mathbb{R}}$ such that $R_0<R_1$ and $L_0,L_1>0$. Define for $i=1,2$ the probability measures:
\[
\mu_i=\frac{1}{\qq(\bar{Q})} \int_{\bar{Q}} g(\alpha,\cdot)_{\sharp}\biggl{(}\frac{1}{\varepsilon L_i} \mathcal{L}^1\llcorner_{[R_i, R_i+\varepsilon L_i]} \biggr) \qq(d\alpha).
\]
First of all  observe that,  for such measures, the transport has to be performed along  the rays $\{X_\alpha\}_{\alpha\in{\bar{Q}}}$. For sure  an optimal  plan with this property exists, since the plan $\pi$  rearranging  the mass monotonically along each ray is optimal; hence $\supp \, \pi \subset \Gamma$, so it 
is $\sfd$-cyclically monotone and therefore $W_{1}$-optimal. 
The aim is to prove that all the other optimal plans  enjoy the  same property.

Indeed,  if not, there would exist at least one optimal plan $\bar{\pi}$ such that, for some $\bar{Q}_1 \subset \bar{Q}$ of positive $\qq$-measure and for some $S\subset{\mathbb{R}}$, it holds
\[
\bar{\pi}\{(g(\alpha,s),g(\alpha',s')) :\alpha,\alpha'\in{\bar{Q}_1},  s,s'\in{S} \,\text{with}\,  \alpha\neq \alpha'\}>0,
\]
with $\bar Q_{1} \times S \subset \dom(g)$.
Let  us consider the plan
\[
\pi^*=\frac{\pi+\bar{\pi}}{2};
\]
trivially, it is still optimal for the couple $\mu_0,\mu_1$.  By construction this plan splits some  points, generating in this way  a set of branching points with positive measure. 
This will lead to a contradiction. Consider indeed the Kantorovich potential $v$ associated to the 
$W_{1}$-optimal transport problem between $\mu_{0}$ and $\mu_{1}$, possibly different from the 1-Lipschitz function $u$ we fixed above. Theorem \ref{teo:branch}
applied to $v$ implies 
that necessarily that $\mm(A_{\pm,v}) = 0$.    
Since $A_{\pm,v}$ will contain $P_{1}(\{(g(\alpha,s),g(\alpha',s')) :\alpha,\alpha'\in{\bar{Q}_1},  s,s'\in{S} \,\text{with}\,  \alpha\neq \alpha'\})$ considered above, and $\mu_{0} \ll \mm$, the contradiction
with $\bar\pi (\{(g(\alpha,s),g(\alpha',s')) :\alpha,\alpha'\in{\bar{Q}_1},  s,s'\in{S} \,\text{with}\,  \alpha\neq \alpha'\}) > 0$ follows.
Hence, every optimal plan will have support contained in the set
$$
 A_{\bar{Q}}^{\varepsilon}:=\cup_{\alpha\in{\bar{Q}}} g(\alpha, [R_0,R_0+\varepsilon L_0])\times g(\alpha, [R_1,R_1+\varepsilon L_1]).
$$

{\bf Step 2.}\\
Since by definition $\mu_0,\mu_1\ll \m$,  there exists a dynamic transport plan $\ppi$ as in Definition \ref{D:CD1entropy} such that for $\mu_t:=({\rm e}_t)_*\ppi=\rho_t\mm$
the inequality  \eqref{equ:cd} holds true. Step 1 above and the fact that $\ppi$ is concentrated on constant speed geodesics completely characterize $\ppi$; in particular we have that for $q$-a.e.\ $\alpha\in Q$ the function $\rho_t$ is 0 $m_\alpha$-a.e.\ outside the `interval' $g(\alpha,[R_t,R_t+\eps L_t])$. Hence using the Disintegration Theorem and   Jensen inequality we can estimate the left-hand side of \eqref{equ:cd} by:
\begin{align*}
\int_{X} \rho_t^{1-\frac{1}{N}}\,d\m &= \int_{\bar{Q}} \int_{X_{\alpha}} \rho_t(x)^{1-\frac{1}{N}} m_{\alpha}(dx) \qq(d\alpha)=\int_{\bar{Q}} \int_{R_t}^{R_t+\varepsilon L_t} \rho_t(g(\alpha, s))^{1-\frac{1}{N}}h_{\alpha}(s)ds\qq(d\alpha)\\
& \leq (\varepsilon L_t) \int_{\bar{Q}}\sup_{[R_t,R_t+\varepsilon L_t]}h_{\alpha}^{\frac{1}{N}} \fint_{R_t}^{R_t+\varepsilon L_t} \bigr(\rho_t(g(\alpha, s))h_{\alpha}(s)\bigr)^{1-\frac{1}{N}} ds\qq(d\alpha)\\
&\leq (\varepsilon L_t)^{\frac{1}{N}}\int_{\bar{Q}}\sup_{[R_t,R_t+\varepsilon L_t]}h_{\alpha}^{\frac{1}{N}} \biggl(\int_{R_t}^{R_t+\varepsilon L_t} \rho_t(g(\alpha, s))h_{\alpha}(s) ds\biggr)^{1-\frac{1}{N}}\qq(d\alpha)\\
&\leq (\varepsilon L_t\qq(\bar{Q}))^{\frac{1}{N}} \sup_{\bar{Q}}\biggl(\sup_{[R_t,R_t+\varepsilon L_t]}h_{\alpha}^{\frac{1}{N}}\biggr).
\end{align*}
Arguing similarly, the right-hand side of \eqref{equ:cd} can be estimated in the following way where $\pi = (\ee_{0},\ee_{1})_{\sharp} \ppi$:
\begin{align*}
& \int_{X\times X} \rho_0^{-\frac{1}{N}}(x) \tau_{K,N}^{(1-t)}(\sfd(x,y))+ \rho_1^{-\frac{1}{N}}(y) \tau_{K,N}^{(t)}(\sfd(x,y))\pi(dx,dy)  \\
&~ \geq 
\inf_{ A_{\bar{Q}}^{\varepsilon}} \tau_{K,N}^{(1-t)}(\sfd(x,y))\int_{X}\rho_0^{1-\frac{1}{N}}(x)\m(dx)+
\inf_{ A_{\bar{Q}}^{\varepsilon}} \tau_{K,N}^{(t)}(\sfd(x,y))\int_{X}\rho_1^{1-\frac{1}{N}}(y)\m(dy)\\
&~\geq (\varepsilon \qq(\bar{Q}))^{\frac{1}{N}} \biggl[\inf_{ A_{\bar{Q}}^{\varepsilon}} \tau_{K,N}^{(1-t)}(\sfd(x,y)) \inf_{\bar{Q}}\biggl(\inf_{[R_0,R_0+\varepsilon L_0]} h_{\alpha}^{\frac{1}{N}}\biggr)(L_0)^{\frac{1}{N}} \\
&~\qquad
+\inf_{ A_{\bar{Q}}^{\varepsilon}} \tau_{K,N}^{(t)}(\sfd(x,y)) \inf_{\bar{Q}}\biggl(\inf_{[R_1,R_1+\varepsilon L_1]} h_{\alpha}^{\frac{1}{N}}\biggr)(L_1)^{\frac{1}{N}} \biggr].
\end{align*}
Hence, considering both the estimates obtained so far, we get 
\begin{align*}
(L_t)^{\frac{1}{N}}\sup_{\bar{Q}}\biggl(\sup_{[R_t,R_t+\varepsilon L_t]}h_{\alpha}^{\frac{1}{N}}\biggr) 
\geq & ~  \inf_{ A_{\bar{Q}}^{\varepsilon}} \tau_{K,N}^{(1-t)}(\sfd(x,y)) \inf_{\bar{Q}}\biggl(\inf_{[R_0,R_0+\varepsilon L_0]} h_{\alpha}^{\frac{1}{N}}\biggr)(L_0)^{\frac{1}{N}} \\
& ~ +\inf_{ A_{\bar{Q}}^{\varepsilon}} \tau_{K,N}^{(t)}(\sfd(x,y))\inf_{\bar{Q}}\biggl(\inf_{[R_1,R_1+\varepsilon L_1]} h_{\alpha}^{\frac{1}{N}}\biggr)(L_1)^{\frac{1}{N}}.
\end{align*}
Sending $\varepsilon\to 0$, we obtain 
\begin{equation*}
(L_t)^{\frac{1}{N}}\sup_{\bar{Q}}h_{\alpha}^{\frac{1}{N}}(R_t)\geq (L_0)^{\frac{1}{N}}\inf_ {A_{\bar{Q}}} \tau_{K,N}^{(1-t)}(\sfd(x,y))\inf_{\bar{Q}}h_{\alpha}^{\frac{1}{N}}(R_0)+(L_1)^{\frac{1}{N}}\inf_{ A_{\bar{Q}}} \tau_{K,N}^{(t)}(\sfd(x,y))\inf_{\bar{Q}}h_{\alpha}^{\frac{1}{N}}(R_1),
\end{equation*}
where $A_{\bar{Q}}:=\cup_{\alpha\in{\bar{Q}}} \{(g(\alpha, R_0), g(\alpha, R_1))\}$. Since $g(\alpha,\cdot)$ is an isometry, \eqref{equ:firstclaim} is proved.
\end{proof}

\bigskip
We are now ready to prove the following:
\begin{proposition}
 For $\qq$-a.e. $\alpha\in{Q}$, the metric measure space $(X_{\alpha},\sfd,\m_{\alpha})$ satisfies $\CD(K,N)$.
\end{proposition}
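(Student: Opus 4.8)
The goal is to upgrade the ``sup/inf'' inequality \eqref{equ:firstclaim}, valid over arbitrary Borel subsets $\bar Q\subseteq Q$, into the pointwise inequality
\[
h_\alpha(R_t)^{\frac{1}{N-1}}\geq \sigma_{K,N-1}^{(1-t)}(R_1-R_0)\,h_\alpha(R_0)^{\frac{1}{N-1}}+\sigma_{K,N-1}^{(t)}(R_1-R_0)\,h_\alpha(R_1)^{\frac{1}{N-1}}
\]
for $\qq$-a.e.\ $\alpha$ and all admissible $R_0<R_1$, $t\in[0,1]$; by Remark \ref{rem:cd1dim} this is exactly the statement that $(X_\alpha,\sfd,\m_\alpha)$ satisfies $\CD(K,N)$. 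First I would choose the free parameters $L_0,L_1$ in \eqref{equ:firstclaim} optimally: taking $L_i=h_\alpha(R_i)$ is the natural choice, but since $h_\alpha$ varies with $\alpha$ this cannot be done uniformly, so instead I would fix constants $c_0,c_1>0$ and a small parameter $\delta>0$ and restrict attention to the set
\[
\bar Q_{c_0,c_1,\delta}:=\{\alpha\in Q\;:\;h_\alpha(R_i)\in[c_i,c_i(1+\delta)],\ i=0,1\},
\]
which (for a suitable countable family of choices of $c_0,c_1$ ranging over positive rationals, and $\delta\to 0$) covers $\qq$-a.e.\ $\alpha$ up to the exceptional set where the relevant points do not lie in the domain. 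This uses the measurability of $\alpha\mapsto h_\alpha$ in its continuous representative, which follows from Step 2.

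On such a set \eqref{equ:firstclaim} reads, after plugging $L_i=c_i$ and bounding $\sup_{\bar Q}h_\alpha(R_t)\le\sup_{\bar Q}(\cdots)$ crudely,
\[
c_t^{\frac{1}{N}}\sup_{\bar Q_{c_0,c_1,\delta}}h_\alpha(R_t)^{\frac{1}{N}}\geq c_0^{\frac{1}{N}}\tau_{K,N}^{(1-t)}(R_1-R_0)\,c_0^{\frac{1}{N}}\inf(\cdots)+\cdots,
\]
but the point is more delicate: I need a \emph{lower} bound on $h_\alpha(R_t)$ at $\qq$-a.e.\ individual $\alpha$, whereas \eqref{equ:firstclaim} only controls $\sup_{\bar Q}h_\alpha(R_t)$. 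The standard remedy is to apply \eqref{equ:firstclaim} with $\bar Q$ replaced by the sublevel set $\{\alpha\in\bar Q_{c_0,c_1,\delta}: h_\alpha(R_t)\le \lambda\}$; if this set had positive measure for $\lambda$ strictly below the right-hand side of the desired inequality, then $\sup$ over it would be $\le\lambda$, contradicting \eqref{equ:firstclaim}. Hence the sublevel set is $\qq$-null, and letting $\lambda$ increase to the critical value and $c_0,c_1$ range over a countable dense set with $\delta\to0$, one gets the pointwise inequality with $\tau_{K,N}^{(t)}$ in place of $\sigma_{K,N-1}^{(t)}$ and with the extra factor $L_t^{1/N}=c_t^{1/N}$; since $c_t=(1-t)c_0+tc_1$ and $L_i=c_i$, a short computation converts $\tau^{(t)}_{K,N}(\theta)(c_1/c_t)^{1/N}$-type expressions into the clean $\sigma_{K,N-1}$ form, exactly as in the one-dimensional characterization of $\CD(K,N)$ densities (recall $\tau_{K,N}^{(t)}(\theta)=t^{1/N}\sigma_{K,N-1}^{(t)}(\theta)^{1-1/N}$).

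The main obstacle I anticipate is bookkeeping the exceptional $\qq$-null sets and the interchange of the three limiting operations (the cutoff $\lambda$, the window width $\delta\to0$, and the $\eps\to0$ already performed in the Lemma), together with verifying that the continuous representative of $h_\alpha$ can be used pointwise at $R_0,R_1,R_t$ simultaneously for $\qq$-a.e.\ $\alpha$ — this is where the local Lipschitz regularity and strict positivity from Step 2 are essential, since they guarantee $h_\alpha$ has a genuine continuous representative and that the sets $\bar Q_{c_0,c_1,\delta}$ are $\qq$-measurable. A secondary point is that \eqref{equ:firstclaim} was stated for fixed $R_0,R_1$; to get the inequality for all rational $R_0<R_1<R_1+L_1$ simultaneously one intersects countably many full-measure sets, and then uses continuity of $h_\alpha$ to pass to all real $R_0,R_1$ and all $t\in[0,1]$. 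Once the displayed concavity-type inequality holds for $\qq$-a.e.\ $\alpha$, Remark \ref{rem:cd1dim} closes the proof.
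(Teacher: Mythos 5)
Your high-level strategy --- localizing in $\alpha$ so as to upgrade the sup/inf inequality \eqref{equ:firstclaim} to a pointwise one, then optimizing over the free parameters $L_0,L_1$ --- is conceptually the right one, and your windowing-plus-sublevel-set argument is a reasonable substitute for what the paper actually does (Lusin's theorem to get continuity of $\alpha\mapsto h_\alpha(R_i)$ on a compact $Q_3$ of positive measure, followed by a contradiction in which a fixed $\delta>0$ margin beats the shrinking oscillation $\sup_{\bar Q}-\inf_{\bar Q}$ as $\bar Q$ is shrunk inside $Q_3$). However there is a concrete error in your choice of $L_i$ that undermines the final step.

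Taking ``$L_i=h_\alpha(R_i)$'' (or $L_i=c_i\approx h_\alpha(R_i)$) is \emph{not} the optimizing choice and does \emph{not} reduce to the $\sigma_{K,N-1}$-concavity of $h_\alpha^{1/(N-1)}$. To see the problem set $K=0$, so $\tau^{(t)}_{0,N}=t$ and $\sigma^{(t)}_{0,N-1}=t$. Your route would give, for $\qq$-a.e.\ $\alpha$,
\begin{equation*}
\bigl((1-t)h_\alpha(R_0)+t\,h_\alpha(R_1)\bigr)^{1/N}h_\alpha(R_t)^{1/N}\geq (1-t)h_\alpha(R_0)^{2/N}+t\,h_\alpha(R_1)^{2/N},
\end{equation*}
whereas the target is $h_\alpha(R_t)^{1/(N-1)}\geq (1-t)h_\alpha(R_0)^{1/(N-1)}+t\,h_\alpha(R_1)^{1/(N-1)}$. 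Plugging in, e.g., $N=3$, $t=1/2$, $h_\alpha(R_0)=1$, $h_\alpha(R_1)=4$, the first display gives $h_\alpha(R_{1/2})\gtrsim 2.18$ while the target requires $h_\alpha(R_{1/2})\geq 9/4$, so the ``natural'' choice is strictly weaker and the ``short computation'' you invoke does not close. The correct choice, which the paper makes, is
\begin{equation*}
L_0=\frac{\sigma^{(1-t)}_{K,N-1}(R_1-R_0)\,h_\alpha(R_0)^{1/(N-1)}}{1-t},\qquad L_1=\frac{\sigma^{(t)}_{K,N-1}(R_1-R_0)\,h_\alpha(R_1)^{1/(N-1)}}{t},
\end{equation*}
with which $L_i^{1/N}\tau^{(\cdot)}_{K,N}h_\alpha(R_i)^{1/N}=\sigma^{(\cdot)}_{K,N-1}h_\alpha(R_i)^{1/(N-1)}$ thanks to $\tau^{(t)}_{K,N}(\theta)=t^{1/N}\sigma^{(t)}_{K,N-1}(\theta)^{1-1/N}$, and then $L_t=(1-t)L_0+tL_1$ appears on both sides and cancels cleanly.

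A second, structural remark: the paper first proves the pointwise analogue of \eqref{equ:firstclaim}, namely
\begin{equation*}
(L_t)^{1/N}h_\alpha^{1/N}(R_t)\geq (L_0)^{1/N}\tau^{(1-t)}_{K,N}(R_1-R_0)h_\alpha^{1/N}(R_0)+(L_1)^{1/N}\tau^{(t)}_{K,N}(R_1-R_0)h_\alpha^{1/N}(R_1),
\end{equation*}
for $\qq$-a.e.\ $\alpha$ and \emph{all} $L_0,L_1>0$ (via the Lusin/contradiction argument), and only afterwards substitutes the $\alpha$-dependent $L_i$ above. This decoupling is cleaner than your windowing, because it frees you from having to guess the right $L_i$ before $\alpha$ is pinned down. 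Your windowing scheme can certainly be repaired by windowing on the quantities $\sigma^{(\cdot)}_{K,N-1}h_\alpha(R_i)^{1/(N-1)}$ rather than on $h_\alpha(R_i)$, but as written the argument does not yield the claim.
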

\begin{proof}
By remark \ref{rem:cd1dim},  to prove the claim   is sufficient to show that:
\begin{equation}
\label{equ:claimcd}
h_{\alpha}((1-t)R_0+tR_1)^{\frac{1}{N-1}}\geq \sigma_{K,N-1}^{(1-t)}(R_1-R_0)h_{\alpha}(R_0)^{\frac{1}{N-1}}+\sigma_{K,N-1}^{(t)}(R_1-R_0)h_{\alpha}(R_1)^{\frac{1}{N-1}},
\end{equation}
for all $t\in{[0,1]}$ and for $R_0,R_1\in{ [0,L_{\alpha}]}$ with $R_0<R_1$,
where we have identified the transport ray $X_{\alpha}$ 
with the real interval $[0,L_{\alpha}]$ having the same length.

\noindent
As already did in \cite{CM17a}, it is sufficient to show that for every  $R_0,R_1\in{ [0,L_{\alpha}]}$ with $R_0<R_1$ and $L_0,L_1>0$, we have that for  $\qq$-a.e. $\alpha\in{Q}$ 
\begin{equation}
\label{equ:cmclaim}
(L_t)^{\frac{1}{N}}h_{\alpha}^{\frac{1}{N}}(R_t)\geq(L_0)^{\frac{1}{N}} \tau_{K,N}^{(1-t)}(\sfd(R_0,R_1))h_{\alpha}^{\frac{1}{N}}(R_0)+(L_1)^{\frac{1}{N}} \tau_{K,N}^{(t)}(\sfd(R_0,R_1))h_{\alpha}^{\frac{1}{N}}(R_1),
\end{equation}
for all $t\in{[0,1]}$, where $L_t=(1-t)L_0+tL_1$ (the same for $R_t$). Indeed, if this is the case taking also into account the already established continuity of $h_\alpha$, one can make the choice
\[
L_0=\frac{\sigma_{K,N-1}^{(1-t)}(\sfd(R_0,R_1)) h(R_0)^{\frac{1}{N-1}}}{1-t}, \quad 
L_1=\frac{\sigma_{K,N-1}^{(t)}(\sfd(R_0,R_1)) h(R_1)^{\frac{1}{N-1}}}{t},
\]
obtaining exactly \eqref{equ:claimcd}. Thus, our aim will be proving \eqref{equ:cmclaim}.
Arguing by contraddiction, let us assume that there exist $R_0,R_1\in{[0, L_{\alpha}]}$,
$L_{0},L_{1}>0$ with $R_{0} + L_{0}, R_{1} + L_{1} < L_{\alpha}$
and a Borel set $Q_1\subseteq Q$  with positive $\qq$-measure such that for every $\alpha\in{Q_1}$ it holds:
\begin{equation}
(L_t)^{\frac{1}{N}}h_{\alpha}^{\frac{1}{N}}(R_t)<(L_0)^{\frac{1}{N}} \tau_{K,N}^{(1-t)}(\sfd(R_0,R_1))h_{\alpha}^{\frac{1}{N}}(R_0)+(L_1)^{\frac{1}{N}} \tau_{K,N}^{(t)}(\sfd(R_0,R_1))h_{\alpha}^{\frac{1}{N}}(R_1).
\end{equation}
By Lusin Theorem, there exists a Borel set $Q_2\subset Q_1$ with positive $\qq$-measure on which the maps $\alpha \mapsto h_{\alpha}(R_i)$, for $i=0,t,1$ are continuous. Hence, fixed $\delta>0$, there exists $Q_3 \subset Q_2$ with positive $\qq$-measure  such that
\[
(L_t)^{\frac{1}{N}}h_{\alpha}^{\frac{1}{N}}(R_t)<(L_0)^{\frac{1}{N}}\tau_{K,N}^{(1-t)}(\sfd(R_0,R_1))h_{\alpha}^{\frac{1}{N}}(R_0)+(L_1)^{\frac{1}{N}} \tau_{K,N}^{(t)}(\sfd(R_0,R_1))h_{\alpha}^{\frac{1}{N}}(R_1)-\delta,\,\,\,\, \forall \alpha\in{Q_3}.
\]
In particular, for every $\bar{Q}\subset Q_3$ compact set  with positive $\qq$-measure:
\[
(L_t)^{\frac{1}{N}}\sup_{\bar{Q}} h_{\alpha}^{\frac{1}{N}}(R_t)<(L_0)^{\frac{1}{N}}\tau_{K,N}^{(1-t)}(\sfd(R_0,R_1))\sup_{\bar{Q}} h_{\alpha}^{\frac{1}{N}}(R_0)+(L_1)^{\frac{1}{N}}\tau_{K,N}^{(t)}(\sfd(R_0,R_1))\sup_{\bar{Q}} h_{\alpha}^{\frac{1}{N}}(R_1)-\delta.
\]
Combining the latter inequality with \eqref{equ:firstclaim}, we deduce that for any $\bar{Q}\subset Q_3$ Borel set  with positive $\qq$-measure
\begin{align*}
&(L_0)^{\frac{1}{N}}\tau_{K,N}^{(1-t)}(\sfd(R_0,R_1))\inf_{\bar{Q}}h_{\alpha}^{\frac{1}{N}}(R_0)+(L_1)^{\frac{1}{N}}\tau_{K,N}^{(t)}(\sfd(R_0,R_1))\inf_{\bar{Q}}h_{\alpha}^{\frac{1}{N}}(R_1) <\\
& (L_0)^{\frac{1}{N}}\tau_{K,N}^{(1-t)}(\sfd(R_0,R_1))\sup_{\bar{Q}} h_{\alpha}^{\frac{1}{N}}(R_0)+(L_1)^{\frac{1}{N}}\tau_{K,N}^{(1-t)}(\sfd(R_0,R_1)) \sup_{\bar{Q}} h_{\alpha}^{\frac{1}{N}}(R_1)-\delta.
\end{align*}
Since the parameter $\delta$ does not depend on $\bar{Q}$, we obtain a contradiction.
\end{proof}

This concludes the proof of the implication: from $\CD_1(K,N)$ to $\CD^{1}(K,N)$.
We will next move to the opposite implication.

\subsection{$\CD^{1}(K,N) \implies \CD_{1}(K,N)$}

Notice that $\CD^{1}(K,N)$ implies that $(X,\sfd,\mm)$ is a proper geodesic space 
and verifies $\MCP(K,N)$ (see for all the details \cite{CMi}).

Let $\mu_0, \mu_1\in{\mathcal{P}_1(X,\sfd,\m)}$  be given. 
We will construct a $W_{1}$-geodesic verifying the Entropy inequality. 
Consider therefore $u:X\to \mathbb{R}$ a  Kantorovich potential associated to the 
transport problem between $\mu_0,\mu_1$ with cost $\sfd$.
Consider the associated $\Gamma_{u}$; then any optimal transport plan 
$\pi$ has to be concentrated over $\Gamma_{u}$, i.e. $\pi(\Gamma_{u})=1$. 
Moreover, with no loss in generality we can assume that $\mu_{0}$ is concentrated over the 
transport set $\T_{u}^{b}$: indeed the part of $\mu_{0}$ outside of $\T_{u}^{b}$ 
is left in place by $\pi$; in particular, it will not give any contribution in the 
Entropy inequality as $\tau_{K,N}^{(1-t)}(0) = 0$.

Since $u$ is $1$-Lipschitz, by the  $\CD^1_u(K,N)$ condition there exist a family 
of rays $\{X_{\alpha}\}_{\alpha\in{Q}}\subset X$ and a disintegration 
of $\m\llcorner_{\mathcal{T}_{u}}$ on $\{X_{\alpha}\}_{\alpha\in{Q}}$ such that:
\begin{equation}\label{E:Disintm}
\m\llcorner_{\mathcal{T}_{u}}=  \m\llcorner_{\T_{u}^{b}} = \int_{Q} \m_{\alpha}\, \mathfrak{q}(d\alpha), \,\,\,\text{where}\, \m_{\alpha}(X_{\alpha})=1,\,\, \text{for}\,\mathfrak{q}\text{-a.e.} \,\alpha\in{Q},
\end{equation}
where the first identity is given by Theorem \ref{teo:branch} 
 and $(X_{\alpha},\sfd, \m_{\alpha})\in{\CD(K,N)}$.  It follows that 
\begin{equation}
\mu_0 =\rho_0\m= \int_{Q} \rho_{0}\,\m_{\alpha}\,\qq(d\alpha )=\int_{Q} \mu_{0,\alpha}\qq_0(d\alpha)
\end{equation}
where  $\mu_{0,\alpha}=\rho_{0}\m_{\alpha}\cdot(\int \rho_0\m_{\alpha})^{-1}$ and 
$\q_0=f_{\sharp}(\mu_0)$ with $f$   the quotient map. 
Then we claim that  for any Borel  set $C\subseteq Q$  it holds:
\[
(f^{-1}(C)\times X)\cap (\Gamma_{u}\setminus\{x=y\})\cap (\mathcal{T}_{u}^{b}\times \mathcal{T}_{u}^{b})=
( X\times f^{-1}(C))\cap (\Gamma_{u} \setminus\{x=y\})\cap (\mathcal{T}_{u}^{b}\times \mathcal{T}_{u}^{b}).
\]
Indeed, since $\mu_0(\T_{u}^{b})=\mu_1(\T_{u}^{b})=1$, 
then $\pi((\Gamma_{u}\setminus\{x=y\})\cap \mathcal{T}^{b}_{u}\times \mathcal{T}_{u}^{b})=1$; hence 
if $x,y\in{\T_{u}^{b}}$ with $(x,y)\in{\Gamma_{u}}$, then it must be $f(x)=f(y)$ since $\T_{u}^{b}$ 
does not admit forward or backward branching points. This implies that  
\begin{align*}
\mu_0(f^{-1}(C))&= \pi((f^{-1}(C)\times X)\cap (\Gamma_{u}\setminus\{x=y\}))\\
&= \pi ( X\times f^{-1}(C))\cap (\Gamma_{u} \setminus\{x=y\})\\
&= \mu_1(f^{-1}(C));
\end{align*}
in particular $\qq_{0} = \qq_{1} : = f_{\sharp}(\mu_{1})$.
Hence, we can write the following disintegration:
$
\mu_1=\rho_1\m= \int_{Q} \rho_{1}\,\m_{\alpha}\,\qq(d\alpha )=\int_{Q} \mu_{1,\alpha}\qq_0(d\alpha),
$
where $\mu_{1,\alpha}=\rho_{1}\m_{\alpha}\cdot(\int \rho_0\m_{\alpha})^{-1}$ 
and, $\qq_{0}$-a.e., $\mu_{0,\alpha},\mu_{1,\alpha}$ are probability measures on $X_{\alpha}$. 
Furthermore, by construction they are 
absolutely continuous with respect to  $\m_{\alpha}$. By the $\CD^1_u(K,N)$ condition,  the metric measure space $(X_{\alpha},\sfd,\m_{\alpha})$ satisfies $\CD(K,N)$ and hence there exists an optimal dynamical plan $\nu_{\alpha}$ such  that $\rho_{t,\alpha}\mm_{\alpha} = \mu_{t,\alpha}=(\ee_t)_{\sharp}\nu_{\alpha} $ is a $W^1$-geodesic interpolating $\mu_{0,\alpha}$ ad $\mu_{1,\alpha}$   and
\begin{equation}
\label{equ:cdpunt}
\rho_{t,\alpha}^{-\frac{1}{N'}}(\gamma_t)\geq \tau_{K,N'}^{(1-t)}(\sfd(\gamma_0,\gamma_1)) \rho_{0,\alpha}^{-\frac{1}{N'}}(\gamma_0)+\tau_{K,N}^{(t)}(\sfd(\gamma_0,\gamma_1))\rho_{1,\alpha}^{-\frac{1}{N'}}(\gamma_1), \,\,\, \text{for } \nu_{\alpha}  \text{ a.e.\,} \gamma.
\end{equation}
It is then natural to proceed gluing 1-dimensional geodesics: define
$ \nu=\int_{Q} \nu_{\alpha}\qq_0(d\alpha)$  and set $\mu_t=(\ee_t)_{\sharp}\nu$. 
Observe that, it holds
$\mu_t= \int_{Q} \mu_{t,\alpha}\qq_0(d\alpha)$ and 
we claim that $\{\mu_t\}$ is a $W_1$-geodesic interpolating $\mu_0$ and $\mu_1$. 
Indeed:
\begin{align*}
W_{1}(\mu_t,\mu_s) &\leq \int_{X\times X} \sfd(x,y) (\ee_t,\ee_s)_{\sharp}\nu(dxdy)\\
&=\int_{Q}\int_{X_{\alpha}\times X_{\alpha}}  \sfd(x,y) (\ee_t,\ee_s)_{\sharp}\nu_{\alpha}(dxdy) \qq_0(d\alpha) \\
&= |t-s| \int_{Q}\int_{X_{\alpha}\times X_{\alpha}}  \sfd(x,y) (\ee_0,\ee_1)_{\sharp}\nu_{\alpha}(dxdy) \qq_0(d\alpha) \\
&=|t-s| \int_{X\times X}\sfd(x,y) (\ee_0,\ee_1)_{\sharp}\nu(dxdy)\\
&=|t-s|W_1(\mu_0,\mu_1).
\end{align*}
The last equality  follows from the optimality of the plan: indeed  $(\ee_0,\ee_1)_{\sharp}\nu$
 is concentrated on a $\sfd$-cyclically monotone with  marginals  $\mu_0$ and $\mu_1$.
To conclude, we show  the convexity inequality  \eqref{equ:cd} along the geodesic $\mu_t$. 

If $\mu_{t} = \rho_{t} \mm$, it follows from \eqref{E:Disintm} that for 
each $t\in{[0,1]}$ it holds $\rho_{t,\alpha}= \frac{\rho_t}{\int \rho_0 m_{\alpha}}$.
Hence the inequality \eqref{equ:cdpunt} can be rewritten in the following way:
\begin{equation}
\label{equ:cdespl}
\rho_{t}^{-\frac{1}{N'}}(\gamma_t)\geq \tau_{K,N'}^{(1-t)}(\sfd(\gamma_0,\gamma_1)) \rho_{0}^{-\frac{1}{N'}}(\gamma_0)+\tau_{K,N}^{(t)}(\sfd(\gamma_0,\gamma_1))\rho_{1}^{-\frac{1}{N'}}(\gamma_1), \,\,\, \text{for } \nu_{\alpha}\text{-a.e.\,} \gamma.
\end{equation}
Since for $\qq_0$-a.e. $\alpha$ the inequality \eqref{equ:cdespl} holds  for $\nu_{\alpha}-$a.e. $\gamma$,  a fortiori  it holds true for $\nu-$a.e. $\gamma$; hence, the claim is proved.

\footnotesize


\begin{thebibliography}{GMS13}



\bibitem{CCMcCAS}{\sc  A.~Akdemir, F.~Cavalletti, A.~Colinet,
R.~McCann, and F.~Santarcangelo}:
{\em Independence of synthetic Curvature Dimension conditions on transport distance exponent},
work in progress.

\bibitem{AGS11a}
\textsc{L. Ambrosio, N. Gigli, G. Savar\'e}:
{\em Calculus and heat flow in  metric measure spaces and 
applications to spaces with {R}icci bounds from  below}, 
Invent. Math.,   \textbf{195},  2,  (2014), 289--391.


\bibitem{AGS11b} 
\textsc{L. Ambrosio, N. Gigli, G. Savar\'e:}
{\em Metric measure spaces with {R}iemannian {R}icci curvature bounded from below},
Duke Math. J., \textbf{163}, (2014), 1405--1490.
%
%
%
%
%
%
%
%
%
%
%
%
%
%
%
%
%
\bibitem{biacar:cmono} \textsc{S. Bianchini and L. Caravenna:}
{\em On the extremality, uniqueness and optimality of transference plans},
Bull. Inst. Math. Acad. Sin.(N.S.),  \textbf{4},  (2009), 353--454.
%
\bibitem{biacava:streconv} \textsc{S.~Bianchini and F.~Cavalletti:}
{\em The {M}onge problem for distance cost in geodesic spaces},
 Comm. Math. Phys,  {318}:615 -- 673, 2013.



\bibitem{cava:MongeRCD} {\sc F.~Cavalletti:}
{\em  Monge problem in metric measure spaces with Riemannian curvature-dimension condition },
 Nonlinear Anal.  {99}:136--151, 2014.


\bibitem{cava:decomposition}{\sc F.~Cavalletti:}
{\em Decomposition of geodesics in the {W}asserstein space and the  globalization property},
 Geom. Funct. Anal.,  {24}:493 -- 551, 2014.


%

\bibitem{CMi} {\sc F. Cavalletti, E. Milman:}
{\em The Globalization Theorem for the Curvature Dimension Condition},
preprint arXiv:1612.07623.
%
\bibitem{CM17a}
{\sc F. Cavalletti, A. Mondino:}
{\em Sharp and rigid isoperimetric inequalities in metric-measure spaces 
with lower Ricci curvature bounds},
Invent. Math., {\bf 208}, (2017), 803--849.  
%
%
\bibitem{CavallettiMondino17c} \leavevmode\vrule height 2pt depth -1.6pt width 23pt:
\textit{Optimal maps in essentially non-branching spaces,}
Comm.  Cont. Math., {\bf. 19}, No. 6 (2017).
%
%

\bibitem{CM18}
{\sc F. Cavalletti, A. Mondino:}
{\em New formulas for the Laplacian of distance functions and applications},
 Preprint,	arXiv:1803.09687.


\bibitem{cavasturm:MCP}
{\sc F.~Cavalletti and K.-T. Sturm:}
{\em Local curvature-dimension condition implies measure-contraction  property},
 J. Funct. Anal.,  {262}:5110--5127, 2012.

 		
%
%
%
\bibitem{Fre:measuretheory4} 
{\sc D.H. Fremlin:},
{\em Measure Theory}, volume~4.
 Torres Fremlin, (2002).
%
%
%
%
%
%
%
%
%
%

\bibitem{KellENB} {\sc M.~Kell:}
{\em Transport maps, non-branching sets of geodesics and measure rigidity},
Adv.  Math., \textbf{320}, (2017), 520--573.


\bibitem{KellWp} {\sc M.~Kell:}
{\em On interpolation and curvature via Wasserstein geodesics},
Adv. Calc. Var., \textbf{10(2)}, (2017),125--167.


%
\bibitem{klartag} 
{\sc B.~Klartag:}
{\em Needle decomposition in Riemannian geometry},
Mem.  Amer. Math. Soc.,  \textbf{249}, (2017), no. 1180, v + 77 pp.
%
%
%
%
%
\bibitem{lottvillani:metric} 
{\sc J.Lott and C.Villani:}
{\em Ricci curvature for metric-measure spaces via optimal transport},
Ann. of Math. (2) \textbf{169} (2009), 903--991.
%
%
%
%
%
%
%
%
\bibitem{Ohta1}  {\sc S.-I.~Ohta:}
\newblock  On the measure contraction property of metric measure spaces,
\newblock  {\em Comment. Math. Helv.},  {82}:805--828, 2007.



%
%
%
\bibitem{R2012} {\sc T.~Rajala:}
\newblock {Interpolated measures with bounded densities in metric spaces satisfying the curvature-dimension conditions of Sturm},
\newblock{\em J. Funct. Anal.},  {263}:896--924, 2012.

\bibitem{RS2014} 
{\sc T. Rajala and K.T. Sturm:}
{\em Non-branching geodesics and optimal maps in strong $\CD(K,\infty)$-spaces},
Calc. Var. Partial Differential Equations, \textbf{50},  (2014), 831--846.
%
%
%
\bibitem{Srivastava} {\sc S.M.~Srivastava}: 
{\em A course on Borel sets},
\newblock Graduate Texts in Mathematics, Springer 1998.

%
\bibitem{sturm:I} {\sc K.T. Sturm:}
{\em On the geometry of metric measure spaces. I},
Acta Math. \textbf{196} (2006), 65--131.
%
\bibitem{sturm:II}  \leavevmode\vrule height 2pt depth -1.6pt width 23pt:
{\em On the geometry of metric measure spaces. II},
Acta Math. \textbf{196} (2006), 133--177.
%
%
\bibitem{Vil:topics} {\sc C.~Villani:}
{\em Topics in optimal transportation}, 
volume~58 of {\em Graduate  Studies in Mathematics}.
\newblock American Mathematical Society, Providence, RI, 2003.

\bibitem{Vil} {\sc C.~Villani:}
\newblock {\em Optimal transport - old and new}, volume 338 of {\em Grundlehren
  der Mathematischen Wissenschaften [Fundamental Principles of Mathematical
  Sciences]}.
\newblock Springer-Verlag, Berlin, 2009.

		
\end{thebibliography}
\end{document}